\newtheorem{theorem}{Theorem}
\newtheorem{lemma}{Lemma}
\newtheorem{example}{Example}
\newtheorem{remark}{Remark}
\newtheorem{definition}{Definition}
\title{Feynman diagrams and their limits for Bernoulli random variables}
\author{Anastasiia Hrabovets}
\date{April 2023}
\begin{document}

\maketitle

\section{Introduction}
    
    According to the central limit theorem, the Gaussian distribution is a limit of the distributions of the normalized sums of independent Bernoulli random variables. 
    The survey \cite{Privault_1} presented the construction of basic operators of stochastic analysis, such as divergence and gradient, for the Bernoulli process, studied the chaos representation property, presented the application to functional inequalities, such as deviation inequalities and logarithmic Sobolev inequalities,  in discrete settings. The study of a concentration property of the distributions of the weighted sums with Bernoullian coefficients was used to derive an almost surely version of the central limit theorem~\cite{Bobkov}.
    
    In this article, we will construct an approximation of Gaussian white noise based on the sequence of Bernoulli random variables and define Wick's products and the stochastic exponent for the Bernoulli case. Here we will propose a method to calculate the moments of Wick's products for Bernoulli variables using diagrams, that converge to Feynman diagrams in the Gaussian case.
    
    The space of square-integrable functions measurable with respect to Gaussian white noise is an important tool for describing quantum systems because the decomposition of this space into the direct sum of subspaces of orthogonal polynomials is isomorphic to the Fock space \cite{Simon}. Naturally, the question of such a decomposition for the space of functions from a sequence of Bernoulli random variables arises. We will prove that orthogonal polynomials for Bernoulli noise converge to Hermite polynomials, which form an orthogonal system in the Gaussian case.
	
	\section{Bernoulli noise}
	
	In this section, we consider an analog of Gaussian white noise based on Bernoulli random variables. 
	
	Let $\left\{\varepsilon_n, n\geq 1\right\} $ be a sequence of independent random variables with Bernoulli distribution:
	$$ \varepsilon = 
	\begin{cases}
		1, \;\;\frac{1}{2}\\
		-1, \frac{1}{2}
	\end{cases} $$
	
	Define $$\forall f \in C\left([0,1]\right):~ \varphi\left(f\right)  := \sum\limits_{k=0}^n f\left(\frac{k}{n}\right)\frac{\varepsilon_k}{\sqrt{n}}$$
	
	\begin{definition}\label{def_bernoulli_noise}
		We will call a set $\left\{\varphi(f): f \in C\left([0,1]\right)\right\}$ of random variables by the Bernoulli noise in $C\left([0,1]\right)$.
	\end{definition}
	
	\par Let us consider the properties of the Bernoulli noise.
	
	\begin{lemma}\label{lem_moments_powers}
		For any $ f_1, \ldots, f_j \in L_2([0,1])$ and $ q_1, \ldots q_j  \in \mathbb{N}$ following statement is true:
		\begin{gather}\label{1}
			E \left[\varphi^{q_1}(f_1) \cdot \ldots \cdot \varphi^{q_j}(f_j) \right] = \nonumber \\
			= \sum\limits_\Pi \prod\limits_{D \in \Pi}  \sum\limits_{k=1}^n \sum_{p = 1}^\infty (-1)^{p+1} \frac{|B_{2p}|2^{2p}\left(2^{2p} -1\right)}{2p} \frac{1}{(2p - |D|)!} \prod\limits_{d \in D} \left(f_d\left(\frac{k}{n}\right)\frac{1}{\sqrt{n}}\right) \cdot \nonumber \\
			\cdot \left(\sum\limits_{m=1}^j \lambda_m f_m \left(\frac{k}{n}\right)\frac{1}{\sqrt{n}}\right)^{2p - |D|} \mathbbm{1}_{(2p-|D| \geqslant 0)}, ~~\overline{\lambda} = 0,
		\end{gather}  where $\Pi$ is a set of all possible partitions of a set $\left\{1_1, 1_2, \ldots 1_{q_1}, 2_1, \ldots, 2_{q_2}, \ldots, j_{q_j}\right\}$, $D$ is one of possible partitions, $B_i$ are Bernoulli numbers.
	\end{lemma}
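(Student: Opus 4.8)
The plan is to pass from moments to cumulants through the joint moment generating function, exploiting the independence of the $\varepsilon_k$. First I would introduce auxiliary parameters $\lambda_1,\dots,\lambda_j$ and the linear combination $Z(\overline\lambda)=\sum_{m=1}^j\lambda_m\varphi(f_m)=\sum_{k=1}^n a_k\varepsilon_k$, where $a_k=\frac1{\sqrt n}\sum_m\lambda_m f_m(k/n)$. Since the $\varepsilon_k$ are independent and $E[e^{t\varepsilon}]=\cosh t$, the joint generating function factorizes,
$$E\!\left[e^{Z(\overline\lambda)}\right]=\prod_{k=1}^n\cosh(a_k),$$
so that the cumulant generating function is $g(\overline\lambda):=\log E[e^{Z}]=\sum_{k=1}^n\log\cosh(a_k)$. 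The desired moment is recovered as the mixed derivative $E[\prod_m\varphi^{q_m}(f_m)]=\partial_{\lambda_1}^{q_1}\cdots\partial_{\lambda_j}^{q_j}\exp(g(\overline\lambda))\big|_{\overline\lambda=0}$, which is exactly what the notation $\overline\lambda=0$ in \eqref{1} records.

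Second, I would expand $\log\cosh$. Using $\frac{d}{dt}\log\cosh t=\tanh t=\sum_{p\ge1}\frac{2^{2p}(2^{2p}-1)B_{2p}}{(2p)!}t^{2p-1}$ and integrating gives $\log\cosh t=\sum_{p\ge1}c_{2p}t^{2p}$ with $c_{2p}=\frac{2^{2p}(2^{2p}-1)B_{2p}}{2p\,(2p)!}$. Since the Bernoulli numbers satisfy $(-1)^{p+1}B_{2p}=|B_{2p}|$, the quantity $c_{2p}(2p)!$ equals $(-1)^{p+1}\frac{|B_{2p}|2^{2p}(2^{2p}-1)}{2p}$, i.e. the $2p$-th cumulant of $\varepsilon$; all odd cumulants vanish by symmetry. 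Substituting the expansion, $g(\overline\lambda)=\sum_{k=1}^n\sum_{p\ge1}c_{2p}a_k^{2p}$.

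Third, the combinatorial heart is the exponential (Bell-polynomial) formula: treating the $q_1+\dots+q_j$ differentiations as distinct labelled slots forming the set $\{1_1,\dots,j_{q_j}\}$, one has $\partial_{\lambda_1}^{q_1}\cdots\partial_{\lambda_j}^{q_j}e^{g}=e^{g}\sum_\Pi\prod_{D\in\Pi}\partial_D g$, where $\Pi$ runs over all set partitions of the slots and $\partial_D=\prod_{d\in D}\partial_{\lambda_{m_d}}$. At $\overline\lambda=0$ the prefactor $e^{g(0)}=1$ drops out. It then remains only to differentiate a single block: applying $\partial_D$ to $c_{2p}a_k^{2p}$ lowers the power by $|D|$ and produces the constant $\frac{(2p)!}{(2p-|D|)!}$ together with $\prod_{d\in D}\frac{f_{m_d}(k/n)}{\sqrt n}$, leaving the residual factor $a_k^{2p-|D|}=\big(\sum_m\lambda_m f_m(k/n)/\sqrt n\big)^{2p-|D|}$. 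Summing over $k$ and $p$ and multiplying the block factors reproduces \eqref{1} verbatim, with the indicator enforcing $2p\ge|D|$.

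The main obstacle is the careful bookkeeping behind the exponential formula: because a given variable $\lambda_m$ is differentiated $q_m$ times, the slots must be treated as a labelled multiset so that the partitions $\Pi$ are counted with the correct multiplicities. I would establish this either by induction on the total number of slots (differentiating $e^g$ once more and using the Leibniz rule to pass from partitions of $r$ slots to partitions of $r+1$ slots) or by invoking the multivariate Faà di Bruno formula. A secondary technicality is justifying the termwise differentiation of the $\log\cosh$ series and the interchange of the finite sum over $k$ with the derivatives, which is immediate since each $a_k$ is linear in $\overline\lambda$ and the series for $\log\cosh$ converges on a neighbourhood of the origin.
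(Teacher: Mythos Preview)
Your proposal is correct and follows essentially the same route as the paper: compute the joint moment generating function $E\exp\big(\sum_m\lambda_m\varphi(f_m)\big)$, factor it by independence of the $\varepsilon_k$ into a product of $\cosh$'s, take the logarithm and expand via the Bernoulli-number series for $\log\cosh$, and then extract the mixed derivative at $\overline\lambda=0$ using Fa\`a di Bruno's formula over set partitions. The paper does exactly this (writing $\cosh$ as $\cos$ of an imaginary argument and citing Fa\`a di Bruno directly), so your more explicit bookkeeping of the block derivatives and the labelled-slot interpretation only fills in details the paper leaves implicit.
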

	\begin{proof}
		Let $ \overrightarrow{\lambda} \in \mathbb{R}^j.$ Then
		$$E e^{\left(\overrightarrow{\lambda}, \left(\varphi(f_1) \cdot \ldots \cdot \varphi(f_j)\right)\right)} = E \exp\left(\sum_{m = 1}^j \lambda_m \varphi(f_m)\right) = $$
		$$ = E \exp \left(\sum\limits_{k=1}^n \left(\sum\limits_{m=1}^j \lambda_m f_m\right) \left(\frac{k}{n} \right) \frac{\varepsilon_k}{\sqrt{n}} \right)= \prod\limits_{k=1}^n E \exp\left(\sum\limits_{m=1}^j \lambda_m f_m \left(\frac{k}{n} \right) \frac{\varepsilon_k}{\sqrt{n}}\right) = $$ 
		$$ = \prod\limits_{k=1}^n \frac{1}{2} \left(\exp\left(\sum\limits_{m=1}^j \lambda_m f_m \left(\frac{k}{n} \right) \frac{1}{\sqrt{n}}\right) + \exp\left(-\sum\limits_{m=1}^j \lambda_m f_m \left(\frac{k}{n} \right) \frac{1}{\sqrt{n}}\right)\right) =$$
		
		$$ = \prod\limits_{k=1}^n \cos \left(\sum\limits_{m=1}^j i \lambda_m f_m \left(\frac{k}{n} \right) \frac{1}{\sqrt{n}}\right) = \exp\left(\sum\limits_{k=1}^n \ln\left(\cos\left(\sum\limits_{m=1}^j i \lambda_m f_m \left(\frac{k}{n}\right)\frac{1}{\sqrt{n}}\right)\right)\right) = $$
		$$ = \exp\left(-\sum\limits_{k=1}^n \sum_{p = 1}^\infty \frac{|B_{2p}|2^{2p}\left(2^{2p} -1\right)}{2p(2p)!} \left(\sum\limits_{m=1}^j i \lambda_m f_m \left(\frac{k}{n}\right)\frac{1}{\sqrt{n}}\right)^{2p}\right) = $$
		$$ = \exp\left(\sum\limits_{k=1}^n \sum_{p = 1}^\infty (-1)^{p+1} \frac{|B_{2p}|2^{2p}\left(2^{2p} -1\right)}{2p(2p)!} \left(\sum\limits_{m=1}^j \lambda_m f_m \left(\frac{k}{n}\right)\frac{1}{\sqrt{n}}\right)^{2p}\right)$$
		\par Now note that
		$$E \left[\varphi(f_1)^{q_1} \cdot \ldots \cdot \varphi(f_j)^{q_j} \right] = \frac{\partial^q}{\partial \lambda_1^{q_1} \cdot \ldots \cdot \lambda_j^{q_j}} E e^{\left(\overline{\lambda}, \left(\varphi(f_1) \cdot \ldots \cdot \varphi(f_j)\right)\right)},$$
		where $q = \sum q_i$.
		
		\par Using Faa di Bruno's formula for the derivatives of the composition of two real-valued functions \cite{Bruno} one can get:
	=
			$$E \left[\varphi^{q_1}(f_1) \cdot \ldots \cdot \varphi^{q_j}(f_j) \right] = $$
			$$= \sum\limits_\Pi \prod\limits_{D \in \Pi}  \sum\limits_{k=1}^n \sum_{p = 1}^\infty (-1)^{p+1} \frac{|B_{2p}|2^{2p}\left(2^{2p} -1\right)}{2p} \frac{1}{(2p - |D|)!} \prod\limits_{d \in D} \left(f_d\left(\frac{k}{n}\right)\frac{1}{\sqrt{n}}\right) \cdot $$
			$$\cdot \left(\sum\limits_{m=1}^j \lambda_m f_m \left(\frac{k}{n}\right)\frac{1}{\sqrt{n}}\right)^{2p - |D|} \mathbbm{1}_{(2p-|D| \geqslant 0)}, ~~\overline{\lambda} = 0,$$
		
		where $\Pi$ is a set of all possible partitions of a set $\left\{1_1, 1_2, \ldots 1_{q_1}, 2_1, \ldots, 2_{q_2}, \ldots, j_{q_j}\right\}.$
	\end{proof}

	The obtained random variables are analogous to Gaussian white noise. The previous lemma allows us to calculate the moments of the products of these random variables. In the case of Gaussian random variables, one can define the stochastic exponent and Wick's power for random variables. Let's try to do the same for the Bernoulli noise.

	\begin{definition}\label{def_Wick_power}
		Let $\eta$ be a random variable with finite moments. Then $n$th Wick's power $:\eta^n: , n=0, 1, \dots$ is a $n$th-order  polynomial $P_n(\eta)$, which is determined by the relations:
		$$P_0(x) = 1,$$
		$$\frac{\partial}{\partial x} P_n(x) = n P_{n-1}(x), n = 1, 2, \dots,$$
		$$E \left(P_n(\eta)\right) = 0, n = 1, 2, \dots.$$
	\end{definition}

	\begin{definition}\label{def_stochastic_exponent}
		The exponential generating function of a sequence $\left\{:\eta^n:\right\} $ is a power series of the form
		$$:e^{\alpha\eta}:~ = \sum\limits_{n=0}^{\infty}\frac{\alpha^n:\eta^n:}{n!}$$
		It is called the stochastic exponent (or Wick's exponent) for the random variable $\eta$.
	\end{definition}

	\begin{example}
		To show the existence of the stochastic exponent for a Bernoulli random variable, we need to prove that the series given in definition \ref{def_stochastic_exponent} converges almost surely.
		
		Denote $f_n(x) = \frac{\alpha^n P_n(x)}{n!}$ and determine the properties of $f_n$ using the definition of Wick's power:
		
		$$f_0(x) = 1,$$
		$$f_{n+1}^{'}(x) = \frac{d}{dx}\left(\frac{\alpha^{n+1} P_{n+1}(x)}{(n+1)!}\right) = \alpha\left(\frac{\alpha^n P_n(x)}{n!}\right) = \alpha f_n(x).$$
		
		Let's assume $$f_{2n}(x) = \sum\limits_{k=0}^n b_{2k}^{(2n)} x^{2k}.$$
		
		Then
		
		$$f_{2n+1}(x) = \alpha\int f_{2n}(x) dx = \sum\limits_{k=0}^n \alpha b_{2k}^{(2n)} \frac{x^{2k+1}}{2k+1} + C_1.$$ 
		
		Since the mathematical expectation of odd powers of Bernoulli random variables is equal to zero, then 
		$$C_1 = 0,$$
		$$f_{2n+1}(x) =  \sum\limits_{k=0}^n \alpha b_{2k}^{(2n)} \frac{x^{2k+1}}{2k+1}.$$
		
		Similarly
		
		$$f_{2n+2}(x) = \alpha\int f_{2n+1}(x) dx = \sum\limits_{k=0}^n \alpha^2 b_{2k}^{(2n)} \frac{x^{2k+2}}{(2k+1)(2k+2)} + C_2. $$
		
		Since the mathematical expectation of even powers of Bernoulli random variables is equal to one, then 
		$$C_2 = - \sum\limits_{k=0}^n  \frac{\alpha^2 b_{2k}^{(2n)}}{(2k+1)(2k+2)}.$$
		
		From these relations, one can get that \\
		$ \forall n, m \in \mathbb{N}:$ $$|b_m^{2n}| \leqslant \sum\limits_{k=0}^{n-1}  \frac{|\alpha^2 b_{2k}^{(2n-2)}|}{(2k+1)(2k+2)} \leqslant  \sum\limits_{k=0}^{n-1}  \frac{|\alpha^2|}{(2k+1)(2k+2)}\sum\limits_{k=0}^{n-2}  \frac{|\alpha^2 b_{2k}^{(2n-4)}|}{(2k+1)(2k+2)} \leqslant$$ $$\leqslant \ldots \leqslant  \prod\limits_{k=0}^{n-1} |\alpha^2|\left(\frac{1}{1 \cdot 2}  + \ldots + \frac{1}{(2k+1)(2k+2)}\right).$$
		
		Let's check whether the series $\sum\limits_{n=0}^\infty |f_{2n}(\varepsilon)|$ converges
		$$\sum\limits_{n=0}^\infty |f_{2n}(\varepsilon)| = 1 +  \sum\limits_{n=1}^\infty |f_{2n}(\varepsilon)| \leqslant 1 + \sum\limits_{n=1}^\infty \sum\limits_{k=0}^n |b_{2k}^{(2n)}| \leqslant$$ $$ \leqslant 1 + \sum\limits_{n=1}^\infty n \prod\limits_{k=0}^{n-1}|\alpha^2| \left(\frac{1}{1 \cdot 2}  + \ldots + \frac{1}{(2k+1)(2k+2)}\right).$$
		
		Let's consider the sum:
		$$\sum\limits_{k=0}^{n}  \frac{1}{(2k+1)(2k+2)} \leqslant \sum\limits_{k=0}^{\infty}  \frac{1}{(2k+1)(2k+2)} = \frac{1}{2} + \sum\limits_{k=1}^{\infty} \frac{1}{4k^2 + 6k +2} = $$ $$ =  \frac{1}{2} + \frac{1}{4}\sum\limits_{k=1}^{\infty} \frac{1}{k^2 + \frac{3}{2}k + \frac{1}{2}} \leqslant \frac{1}{2} + \frac{1}{4}\sum\limits_{k=1}^{\infty} \frac{1}{k^2} = \frac{1}{2} + \frac{\pi^2}{24} < 0.92.$$
		
		In this case, if $|\alpha| \leqslant 1$, then:
		
		$$\sum\limits_{n=1}^\infty n \prod\limits_{k=0}^{n-1}|\alpha^2| \left(\frac{1}{1 \cdot 2}  + \ldots + \frac{1}{(2k+1)(2k+2)}\right) \leqslant \sum\limits_{n=1}^\infty n (0.92)^{n-1} \text{ ~~converges.}$$
		
		Hence, $\sum\limits_{n=0}^\infty |f_{2n}(\varepsilon)|$ converges.
		
		In a similar way one can show that $\sum\limits_{n=0}^\infty |f_{2n+1}(\varepsilon)|$ also converges.
		
		So, one can conclude that $:e^{\alpha\varepsilon}:$ converges a.s. if $|\alpha| \leqslant 1.$

	\end{example}

	\begin{lemma}\label{lemma_stochatic_exponent}
		If the stochastic exponent $:\exp\left(\alpha \eta\right):$ for random variable $\eta$ exists and converges for $\alpha \in \mathbb{R}$, it can be determined by the formula $$:\exp\left(\alpha \eta\right):~ = \frac{\exp\left(\alpha \eta\right)}{E \exp\left(\alpha \eta\right)}$$ 
	\end{lemma}
	\begin{proof}
		Denote $:\exp\left(\alpha \eta\right):~ = g\left(\eta\right)$. From the relations given in the definitions \ref{def_Wick_power} and \ref{def_stochastic_exponent} and assumptions about the existence and convergence of the stochastic exponent, we can determine the following properties of the function $g(x)$:
		$$\frac{\partial}{\partial x} g(x) = \alpha g(x)$$
		$$E \left(g(\eta)\right) = 1$$
		Then 
		$$\int \frac{1}{g\left(x\right)} \partial g\left(x\right)= \int \alpha \partial x$$
		$$g\left(x\right) = e^{\alpha x + C}$$
		Hence
		$$:\exp\left(\alpha \eta\right): ~= g\left(\eta\right) = e^{\alpha \eta + C}$$
		Let's define a constant $C$ from the condition $$E :e^{\alpha \eta}:~ = 1$$
		$$E g\left(\eta\right) = e^C E e^{\alpha \eta} = 1,$$	
		$$ e^C = \frac{1}{E e^{\alpha \xi}}.$$
		Hence $$:\exp\left(\alpha \eta\right):~ = \frac{\exp\left(\alpha \eta\right)}{E \exp\left(\alpha \eta\right)}.$$
	\end{proof}
	
	The following lemma gives us the expression for Wick's powers for Bernoulli noise.
	
	\begin{lemma}\label{lemma_Wick_bernuolli_noise}
		\begin{gather}\label{2}
			:\varphi^{m}(f):~= \sum\limits_{\pi \in \Pi} \prod\limits_{D \in \pi} \Bigg[\varphi(f) \lambda^{1-|D|} \mathbbm{1}_{(1-|D| \geqslant 0)} + \nonumber \\
			+ (-1)^{\frac{|D|}{2}} b_p \frac{1}{(2-|D|)!}\sum\limits_{k=1}^n \left(f\left(\frac{k}{n}\right) \frac{1}{\sqrt{n}}\right)^{|D|} \left( \lambda f\left(\frac{k}{n}\right) \frac{1}{\sqrt{n}}\right)^{2p-|D|} \mathbbm{1}_{(2p-|D| \geqslant 0)} \Bigg]_{\lambda = 0},
		\end{gather}
	where  $\Pi$ is a set of all possible partitions of a set $\left\{1_1, 1_2, \ldots, 1_m\right\}$ and $b_p = \frac{|B_{2p}|2^{2p}\left(2^{2p} -1\right)}{2p}.$
	\end{lemma}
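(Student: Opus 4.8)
The plan is to recover $:\varphi^m(f):$ from the stochastic exponent rather than from the defining recursion directly. By Definition~\ref{def_stochastic_exponent} the Wick powers are the Taylor coefficients of $:e^{\alpha\varphi(f)}:$, so that
$$:\varphi^m(f):~=\left.\frac{\partial^m}{\partial\alpha^m}:e^{\alpha\varphi(f)}:\right|_{\alpha=0},$$
the term-by-term differentiation being legitimate inside the region where the stochastic exponent converges (cf. the Example). By Lemma~\ref{lemma_stochatic_exponent}, $:e^{\alpha\varphi(f)}:~=e^{\alpha\varphi(f)}/E e^{\alpha\varphi(f)}=e^{\Phi(\alpha)}$, where $\Phi(\alpha):=\alpha\varphi(f)-\ln E e^{\alpha\varphi(f)}$. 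Thus the whole computation reduces to differentiating one exponential $m$ times and evaluating at $\alpha=0$.

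First I would specialise the moment-generating-function computation already performed in the proof of Lemma~\ref{lem_moments_powers} to a single function (the case $j=1$, $\lambda_1=\alpha$, $f_1=f$). This gives $\ln E e^{\alpha\varphi(f)}=\sum_{k=1}^n\sum_{p=1}^\infty(-1)^{p+1}\frac{b_p}{(2p)!}\left(\alpha f(\frac{k}{n})\frac{1}{\sqrt n}\right)^{2p}$ with $b_p=\frac{|B_{2p}|2^{2p}(2^{2p}-1)}{2p}$, and hence an explicit power series for $\Phi$ with $\Phi(0)=0$.

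Next I would apply the exponential (complete Bell polynomial) form of Faà di Bruno's formula to $e^{\Phi}$, just as was done for the moment generating function in the proof of Lemma~\ref{lem_moments_powers}:
$$\frac{\partial^m}{\partial\alpha^m}e^{\Phi(\alpha)}=e^{\Phi(\alpha)}\sum_{\pi\in\Pi}\prod_{D\in\pi}\Phi^{(|D|)}(\alpha),$$
where $\Pi$ is the set of partitions of $\{1_1,\dots,1_m\}$. Since $\Phi(0)=0$, evaluation at $\alpha=0$ kills the prefactor, leaving $:\varphi^m(f):~=\sum_{\pi}\prod_D\Phi^{(|D|)}(0)$, so it remains only to identify each block factor $\Phi^{(|D|)}$ with the bracket of~\eqref{2}. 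Writing the residual evaluation variable as $\lambda$, the linear part $\lambda\varphi(f)$ differentiated $|D|$ times produces $\varphi(f)\lambda^{1-|D|}\mathbbm{1}_{(1-|D|\geqslant 0)}$ — nonzero only for a singleton block, where it equals $\varphi(f)$ — which is the first summand. Differentiating the series part $|D|$ times and keeping it as a function of $\lambda$ gives a $p$-sum with general term $(-1)^{p}\frac{b_p}{(2p-|D|)!}\left(f(\frac{k}{n})\frac{1}{\sqrt n}\right)^{|D|}\left(\lambda f(\frac{k}{n})\frac{1}{\sqrt n}\right)^{2p-|D|}\mathbbm{1}_{(2p-|D|\geqslant 0)}$, summed over $k$; this is the second summand.

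Finally, setting $\lambda=0$ collapses this $p$-sum to its single surviving index $p=|D|/2$ (which forces $|D|$ to be even), turning the running sign $(-1)^{p}$ into $(-1)^{|D|/2}$ and the factorial $\frac{1}{(2p-|D|)!}$ into $1$, and reproduces~\eqref{2} term by term. The delicate points I expect are the combinatorial and sign bookkeeping in the Bell-polynomial expansion and the justification that the infinite $p$-series may be differentiated termwise and then evaluated at $\lambda=0$; the latter is precisely the mechanism that converts the $p$-indexed family into the closed bracket of~\eqref{2}. As a consistency check I would note that the resulting expression is automatically a degree-$m$ polynomial in $\varphi(f)$, with leading term $\varphi^m(f)$ coming from the all-singletons partition and vanishing expectation for $m\geqslant 1$, in agreement with Definition~\ref{def_Wick_power}.
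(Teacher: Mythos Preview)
Your proposal is correct and follows essentially the same route as the paper: express $:\varphi^m(f):$ as the $m$-th $\lambda$-derivative of $e^{\Phi(\lambda)}$ at $\lambda=0$ with $\Phi(\lambda)=\lambda\varphi(f)-\ln E e^{\lambda\varphi(f)}$, compute $\Phi$ from the single-function case of Lemma~\ref{lem_moments_powers}, and apply Fa\`a di Bruno block by block. The only noteworthy difference is in the justification step: rather than citing Lemma~\ref{lemma_stochatic_exponent} and the Example (which, strictly speaking, treats a single $\varepsilon$ rather than $\varphi(f)$), the paper uses the bound $|\varphi(f)|\le\sqrt{n}\max_{[0,1]}|f|$ to get absolute convergence of both series directly and then checks by induction on $m$ that the Taylor coefficients of $e^{\lambda\varphi(f)}/E e^{\lambda\varphi(f)}$ satisfy the defining relations of Definition~\ref{def_Wick_power}.
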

	\begin{proof}
		
		The series
		$$\exp(\lambda \varphi(f)) = \sum\limits_{m=0}^{\infty}\frac{\lambda^m \varphi^m(f)}{m!} $$ converges a.s.
		
		Note that
		$$|\varphi(f)|^m \leq c^m,$$
		where $$c = \sqrt{n} \max\limits_{x \in [0,1]}|f(x)|.$$

		Using this fact and applying Lebesgue's dominated convergence theorem one can get
		$$\mathbb{E} \exp(\lambda \varphi(f)) = \mathbb{E}\sum\limits_{m=0}^{\infty}\frac{\lambda^m \varphi^m(f)}{m!} = \sum\limits_{m=0}^{\infty}\frac{\lambda^m \mathbb{E}\varphi^m(f)}{m!}$$ converges.

		Therefore, the ratio of the exponent and its mathematical expectation can also be written as a convergent series:
		$$\frac{\exp(\lambda \varphi(f))}{\mathbb{E} \exp(\lambda \varphi(f))} = \frac{\sum\limits_{m=0}^{\infty}\frac{\lambda^m \varphi^m(f)}{m!}}{\sum\limits_{m=0}^{\infty}\frac{\lambda^m \mathbb{E}\varphi^m(f)}{m!}} = \sum\limits_{m=0}^{\infty}\frac{\lambda^m P_m(\varphi(f))}{m!}.$$
		
		Now let's show that the polynomials $P_m(\varphi(f))$  satisfy the properties from the definition~\ref{def_Wick_power} of Wick's powers.
		
		$$\sum\limits_{m=0}^{\infty}\frac{\lambda^m \varphi^m(f)}{m!} = \sum\limits_{m=0}^{\infty}\frac{\lambda^m P_m(\varphi(f))}{m!} \sum\limits_{l=0}^{\infty}\frac{\lambda^l \mathbb{E}\varphi^l(f)}{l!}.$$
		
		Consider the coefficients at different degrees of $\lambda:$
		
		$\lambda^0:$
		$$1=P_0(\varphi(f)).$$
		
		$\lambda^1:$
		$$\varphi(f) = P_1(\varphi(f))+\mathbb{E}\varphi(f);$$
		$$P_1(x) = x-\mathbb{E}\varphi(f);$$
		$$\frac{dP_1(x)}{dx}=\frac{d}{dx}\left(x-\mathbb{E}\varphi(f)\right) = 1 = P_0(x);$$
		$$\mathbb{E}P_1(\varphi(f)) = \mathbb{E}\left(\varphi(f) -\mathbb{E}\varphi(f)\right)=0.$$
		
		Suppose that for $\lambda^{m-1}$ the following holds:
		$$P_{m-1}(x)=x^{m-1} - \sum\limits_{i=1}^{m-1}\frac{P_{m-1-i}}{(m-1-i)!}\frac{\mathbb{E} \varphi^i(f)}{i!}(m-1)!;$$
		$$\frac{dP_{m-1}(x)}{dx}=(m-1)P_{m-2}(x);$$
		$$\mathbb{E} P_{m-1}(\varphi(f)) =0.$$
		
		Then for  $\lambda^m$ one can get:
		
		$$\frac{\varphi^m(f)}{m!} = \sum\limits_{i=0}^m\frac{P_{m-1}(\varphi(f))}{(m-i)!}\frac{\mathbb{E}\varphi^i(f)}{i!};$$
		$$P_{m}(x)=x^{m} - \sum\limits_{i=1}^{m}\frac{P_{m-i}}{(m-i)!}\frac{\mathbb{E} \varphi^i(f)}{i!}(m)!;$$
		$$\frac{dP_{m}(x)}{dx}=m\left(x^{m-1} - \sum\limits_{i=1}^{m-1}\frac{P_{m-1-i}}{(m-1-i)!}\frac{\mathbb{E} \varphi^i(f)}{i!}(m-1)!\right)=(m)P_{m-1}(x);$$
		$$\mathbb{E}P_m(x)=\mathbb{E}\left(x^{m} - \sum\limits_{i=1}^{m}\frac{P_{m-i}}{(m-i)!}\frac{\mathbb{E} \varphi^i(f)}{i!}(m)!\right) = \mathbb{E}\left(\varphi^m(f)-\mathbb{E}\varphi^m(f)\right) = 0.$$
		
		All properties given in the definition of Wick's powers \ref{def_Wick_power} are fulfilled. So, we get:
		$$\frac{\exp(\lambda \varphi(f))}{\mathbb{E} \exp(\lambda \varphi(f))} = \sum\limits_{m=0}^{\infty}\frac{\lambda^m :\varphi^m(f):}{m!},$$
		$$:\varphi^{m}(f):~= \frac{d^m}{d \lambda^m}\left[\frac{\exp\left(\lambda \varphi(f)\right)}{E \exp\left(\lambda \varphi(f)\right)}\right]$$

		Analogously to proof of the lemma \ref{lem_moments_powers}, one can show that
		$$E e^{\lambda \varphi(f)} = \exp\left(\sum\limits_{k=1}^n \sum_{p = 1}^\infty (-1)^{p+1} \frac{|B_{2p}|2^{2p}\left(2^{2p} -1\right)}{2p(2p)!} \left( \lambda f \left(\frac{k}{n}\right)\frac{1}{\sqrt{n}}\right)^{2p}\right)$$
		
		Hence 
		$$\frac{\exp\left(\lambda \varphi(f)\right)}{E \exp\left(\lambda \varphi(f)\right)} = $$
		$$ = \exp \left[\lambda \sum\limits_{k=1}^n f \left(\frac{k}{n}\right)\frac{\varepsilon_k}{\sqrt{n}} + \sum\limits_{k=1}^n \sum_{p = 1}^\infty (-1)^p \frac{|B_{2p}|2^{2p}\left(2^{2p} -1\right)}{2p(2p)!} \left( \lambda f \left(\frac{k}{n}\right)\frac{1}{\sqrt{n}}\right)^{2p}\right]$$
		
		Using Faa di Bruno's formula for the derivatives of the composition of two real-valued functions \cite{Bruno} one can get the final expression:
		$$:\varphi^{m}(f):~= \sum\limits_{\pi \in \Pi} \prod\limits_{D \in \pi} \Bigg[\varphi^{|D|}(f) \lambda^{1-|D|} \mathbbm{1}_{(1-|D| \geqslant 0)} + $$
		$$ + (-1)^{\frac{|D|}{2}} b_p \frac{1}{(2p-|D|)!}\sum\limits_{k=1}^n \left(f\left(\frac{k}{n}\right) \frac{1}{\sqrt{n}}\right)^{|D|} \left( \lambda f\left(\frac{k}{n}\right) \frac{1}{\sqrt{n}}\right)^{2p-|D|} \mathbbm{1}_{(2p-|D| \geqslant 0)} \Bigg]_{\lambda = 0},$$
		where  $\Pi$ is a set of all possible partitions of a set $\left\{1_1, 1_2, \ldots, 1_m\right\}.$ and $b_p = \frac{|B_{2p}|2^{2p}\left(2^{2p} -1\right)}{2p}.$
	\end{proof}

	\section{Feynman's diagrams for Bernoulli case}
	
	Lemma \ref{lem_moments_powers} allows us to calculate the moments for the products of random variables from the Bernoulli noise. However, it is useful to be able to calculate moments for more complex functions from the Bernoulli noise, such as Wick's products. For this purpose, in the case of Gaussian white noise, we can use Feynman diagrams. Let's construct an analog of Feynman diagrams for Bernoulli noise.
	
	Consider a random variable 
	$$A(\varphi) := \prod\limits_{i=1}^N :\varphi^{n_i}(f_i):$$
	
	To calculate an expectation for $A(\varphi),$ construct a graph where each multiplier ${:\varphi^{n_i}(f_i):}$ is assigned to a set of vertices $\left\{f_i^1, \ldots, f_i^{n_i}\right\}.$
	
	Define a bijective mapping that assigns an individual number to each vertex:
	$$\psi : F \to \left\{1, 2, \ldots, K\right\},$$
	where $F = \left\{f_{i}^j, ~i=\overline{1,N},\forall i: j=\overline{1,n_i}\right\}$ is a set of all vertices and $|F| = K = \sum\limits_{i=1}^N n_i$ is the number of vertices in the diagram.

	Let's build a diagram $G$ according to the following rules:
	\begin{enumerate}
		\item Construct a set $\Pi$ of all partition of a set $\left\{1, 2, \ldots, K\right\}$ where each block has an even cardinality.
		Partition with blocks that include a set containing only vertices corresponding to the same multiplier $:\varphi^{n_i}(f_i):$ are not taken into account.
		
		\item For each block $s$ of a partition $ S \in \Pi $ we build a method of traversing through all its vertices, which does not pass along the same edge more than once, taking into account the direction of traversal. The traversing starts from the vertex of the graph with the smallest ordinal number. That is, for each partition element $ S $ we choose the vertex with the smallest number:
		$$s_1 = \min\limits_{s} s_i,$$
		and the corresponding traversal method will be specified by an ordered set of vertices:
		$$\left\{s_1\right\} \cup \sigma(s\backslash\left\{s_1\right\}),$$
		where $ \sigma (\cdot) $ is permutation.
		Therefore, the diagram $G$ will correspond to one of the options for traversing blocks from one of the partitions.
	\end{enumerate}

	\begin{theorem}[An analogue of Feynman diagrams for Bernoulli noise]\label{theorem_diagram_Bernuolli}
		\begin{gather}\label{3}
			E A(\varphi) = \sum\limits_{\left\{G\right\}} I(G), \text{where} \nonumber \\
			I(G) = \prod\limits_{s \in S} (-1)^{m-1} \sum\limits_{k=1}^n \prod\limits_{i: \psi(f_i) \in s} f_i\left(\frac{k}{n}\right) \frac{1}{\sqrt{n}},
		\end{gather}
		where $\left\{G\right\}$ is a set of all possible diagrams, $S$ is a partition in which every element $s \in S$ arranged in accordance with the order and direction of the traversing, $m $ is a number of pairs $(a_i, a_{i+1})$ of two neighboring elements from $s$ where $a_i \leqslant a_{i+1}.$
	\end{theorem}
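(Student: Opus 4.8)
The plan is to reduce the statement to a computation with Wick exponentials and then read off the diagrammatic expansion from the exponential (cumulant) formula. First I would attach to each factor a parameter $\lambda_i$ and pass to the Wick exponential $:e^{\lambda_i\varphi(f_i)}:$, whose existence and representation $:e^{\lambda_i\varphi(f_i)}:~=e^{\lambda_i\varphi(f_i)}/E e^{\lambda_i\varphi(f_i)}$ are guaranteed by Lemma~\ref{lemma_stochatic_exponent}. Since $:e^{\lambda_i\varphi(f_i)}:~=\sum_{n_i}\frac{\lambda_i^{n_i}:\varphi^{n_i}(f_i):}{n_i!}$, the quantity $E A(\varphi)$ is the coefficient of $\prod_i \lambda_i^{n_i}/n_i!$, i.e. the result of applying $\prod_i\partial_{\lambda_i}^{n_i}$ at $\lambda=0$ to $E\big[\prod_i :e^{\lambda_i\varphi(f_i)}:\big]$. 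This puts each of the $n_i$ vertices $f_i^1,\dots,f_i^{n_i}$ in bijection with one differentiation $\partial_{\lambda_i}$, so the vertex set $F$ is exactly the index set of the derivative $\partial^K\!\big/\!\prod_i\partial\lambda_i^{n_i}$.

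Next I would compute $E\big[\prod_i :e^{\lambda_i\varphi(f_i)}:\big]=E\big[\prod_i e^{\lambda_i\varphi(f_i)}\big]\big/\prod_i E e^{\lambda_i\varphi(f_i)}$. By the exponential form derived in Lemma~\ref{lem_moments_powers}, the numerator carries the full inner sum $\big(\sum_m\lambda_m f_m(k/n)/\sqrt n\big)^{2p}$ while the denominator carries $\sum_m\big(\lambda_m f_m(k/n)/\sqrt n\big)^{2p}$, so the ratio is $\exp(F)$ with
$$F=\sum_{k=1}^n\sum_{p=1}^\infty (-1)^{p+1}\frac{b_p}{(2p)!}\Bigg[\Big(\sum_m \lambda_m f_m(\tfrac kn)\tfrac1{\sqrt n}\Big)^{2p}-\sum_m\Big(\lambda_m f_m(\tfrac kn)\tfrac1{\sqrt n}\Big)^{2p}\Bigg].$$
Applying the exponential (Faà di Bruno) formula to $\partial^K e^F|_0$ produces a sum over partitions $\pi$ of $F$, each block $s$ contributing the mixed derivative of $F$ along the variables of its vertices, evaluated at $\lambda=0$. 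Since each summand of $F$ is homogeneous of degree $2p$ in $\lambda$, this derivative survives the evaluation at $\lambda=0$ only when the number of derivatives equals $2p$, forcing $|s|=2p$ even; this is the origin of the even-cardinality requirement. Moreover the subtracted diagonal term cancels precisely the contribution of any block all of whose vertices lie in a single factor $:\varphi^{n_i}(f_i):$, which is exactly the partitions excluded in rule~1. For an admissible block $s$ of size $2p$ the contribution is $(-1)^{p+1}b_p\sum_{k=1}^n\prod_{i:\psi(f_i)\in s} f_i(k/n)\tfrac1{\sqrt n}$.

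It remains to match the scalar prefactor $(-1)^{p+1}b_p$ with the sum over traversals in $I(G)$, and this is the step I expect to be the main obstacle. The claim is the combinatorial identity
$$(-1)^{p+1}b_p=\sum_{\sigma}(-1)^{m(\sigma)-1},$$
where $\sigma$ ranges over the orderings $\{s_1\}\cup\sigma(s\setminus\{s_1\})$ of the $2p$ vertices of a block beginning with its smallest vertex (exactly the traversals of rule~2) and $m(\sigma)$ counts the pairs $(a_i,a_{i+1})$ with $a_i\le a_{i+1}$. Writing $(-1)^{m-1}=(-1)^{(2p-1)-m}=(-1)^{d}$ for the number of descents $d$ (there are $2p-1$ neighboring pairs), and noting the leading pair with the minimal vertex is always an ascent, the right-hand side equals $\sum_{\tau\in S_{2p-1}}(-1)^{\mathrm{des}(\tau)}=A_{2p-1}(-1)$, the Eulerian polynomial at $-1$. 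The generating identity $\sum_{N\ge0}A_N(-1)x^N/N!=1+\tanh x$ then gives $A_{2p-1}(-1)=(2p-1)!\,[x^{2p-1}]\tanh x=\frac{2^{2p}(2^{2p}-1)B_{2p}}{2p}=(-1)^{p+1}b_p$, since $b_p$ is precisely the tangent number $(2p-1)!\,[x^{2p-1}]\tan x$. I would check the small cases as a sanity test ($p=1$ gives $1$, and $p=2$ gives $-2=(-1)^3\cdot 2$).

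Finally, distributing this identity across the product over blocks turns the partition sum $\sum_\pi\prod_{s\in\pi}(-1)^{p_s+1}b_{p_s}(\dots)$ into a sum over diagrams $G$ — a partition together with a choice of traversal for each block — with each block weighted by $(-1)^{m-1}$, which is exactly $\sum_{\{G\}}I(G)$. Throughout I would justify the term-by-term differentiation of the generating series using the a.s. convergence of the Wick exponentials established earlier and the bound $|\varphi(f)|\le\sqrt n\max|f|$ from Lemma~\ref{lemma_Wick_bernuolli_noise}, so that all manipulations stay within the radius of convergence in $\lambda$.
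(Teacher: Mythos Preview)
Your argument is correct and reaches the same endpoint as the paper, but the route to the key cancellation is genuinely different. The paper expands each Wick power separately via Lemma~\ref{lemma_Wick_bernuolli_noise}, multiplies out, and then spends most of the proof on a hands-on coefficient-tracking argument: fixing a partition $R$ for the last $N-1$ factors, it enumerates all ways a ``diagonal'' block $\{1,\ldots,1\}$ of size $2k_{1,i}$ can arise (either as a constant from formula~(2) or from the expectation of $\varphi^{2k_{1,i}}(f_1)$), computes the individual coefficients $a_0,\ldots,a_{I_1}$, and shows they sum to zero via the binomial identity $\sum_m \binom{I_1}{m}(-1)^{I_1-m}=0$. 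Your passage to the product of Wick exponentials and the observation that
\[
\log E\!\Big[\prod_i e^{\lambda_i\varphi(f_i)}\Big]-\sum_i \log E\,e^{\lambda_i\varphi(f_i)}
\]
already has the diagonal part subtracted off makes this cancellation automatic: a block whose vertices all come from a single factor differentiates only the subtracted term and hence contributes zero. This is a cleaner and more structural way to see rule~1; what the paper's longer computation buys is that it never leaves the level of finite polynomial identities, so no appeal to convergence of the generating series is needed.

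For the final step both proofs coincide: the paper also reduces the block coefficient to $\sum_{l}(-1)^l E(|s|-1,l)$ with $E(\cdot,\cdot)$ the Eulerian numbers and invokes the same identity with Bernoulli numbers that you obtain from $A_{2p-1}(-1)=(2p-1)!\,[x^{2p-1}]\tanh x$. Your bookkeeping of ascents versus descents (the first pair is forced to be an ascent since $s_1$ is minimal, so $m-1$ counts ascents of a permutation in $S_{2p-1}$) matches the paper's ``$l=m-1$ is the number of ascents in $\{s_2,\ldots,s_{2t}\}$''.
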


\begin{example}\label{ex_diagram_wick_bernuolli}
	Consider
	 $$A(\varphi) = :\varphi^2(f_1): \cdot :\varphi^2(f_2):$$
	 
	Let's define a set of vertices $f_{1_1}, f_{1_2}, f_{2_1}, f_{2_2}.$ 
	Give each vertex an individual number:
	$$\psi(f_{1_1}) = 1,~\psi(f_{1_2}) = 2,~\psi(f_{2_1}) = 3,~\psi(f_{2_2}) = 4.$$
	
	The set  of all the partitions of a set $\left\{1, 2, 3, 4\right\}$ looks like
	$$\Pi = \left\{ \left\{\left\{1,3\right\},\left\{2,4\right\}\right\},\left\{\left\{1,4\right\},\left\{2,3\right\}\right\},\left\{\left\{1,2,3,4\right\}\right\}\right\}.$$
	
	Let's construct diagrams using this set, according to the rules mentioned above. Consider a first partition $\pi_1 = \left\{\left\{1,3\right\},\left\{2,4\right\}\right\}.$ Only one diagram $G_1$ will correspond to this partition. Hence, $S_1 = \pi_1 = \left\{\left\{1,3\right\},\left\{2,4\right\}\right\}, ~m_{1_1} = 1,$ (since $1 \leqslant 3$),  $m_{1_2} = 1, $ (since $2 \leqslant 4$).
	
	Similarly, diagram $G_2$ will correspond to  $S_2 = \pi_2 = \left\{\left\{1,4\right\},\left\{2,3\right\}\right\}, ~m_{2_1} = 1,$ (since $1 \leqslant 4$), $m_{2_2} = 1, $ (since $2 \leqslant 3$).
	
	Diagrams $G_1$ and $G_2$ will look as follows
	\begin{center}
		\begin{tikzpicture}[node distance={20mm},main/.style = {draw, circle}] 
			\node[main] (1) {$1$}; 
			\node[main] (2) [right of=1] {$2$}; 
			\node[main] (3) [below of=1] {$3$};
			\node[main] (4) [below of=2] {$4$};
			
			\node[main] (5) [right of=2] {$1$}; 
			\node[main] (6) [right of=5] {$2$}; 
			\node[main] (7) [below of=5] {$3$};
			\node[main] (8) [below of=6] {$4$};
			
			\draw (1) to (3) node[below right]{$~~~~G_1$};
			\draw (2) to (4);
			\draw (5) to (8);
			\draw (6) to (7) node[below right]{$~~~~G_2$};		
			
		\end{tikzpicture}
	\end{center}

	Determine which terms correspond to the diagrams $G_1$ and $G_2$:
	$$I(G_1) = (-1)^0 \sum\limits_{k=1}^n f_1\left(\frac{k}{n}\right) \frac{1}{\sqrt{n}} ~ f_2\left(\frac{k}{n}\right) \frac{1}{\sqrt{n}} \cdot (-1)^0 \sum\limits_{k=1}^n f_1\left(\frac{k}{n}\right) \frac{1}{\sqrt{n}}~ f_2\left(\frac{k}{n}\right) \frac{1}{\sqrt{n}} =$$
	$$ = \left(\sum\limits_{k=1}^n f_1\left(\frac{k}{n}\right) f_2\left(\frac{k}{n}\right) \frac{1}{n} \right)^2 ,$$
	$$I(G_2) = (-1)^0 \sum\limits_{k=1}^n f_1\left(\frac{k}{n}\right) \frac{1}{\sqrt{n}} ~ f_2\left(\frac{k}{n}\right) \frac{1}{\sqrt{n}} \cdot (-1)^0 \sum\limits_{k=1}^n f_1\left(\frac{k}{n}\right) \frac{1}{\sqrt{n}}~ f_2\left(\frac{k}{n}\right) \frac{1}{\sqrt{n}} =$$
	$$ = \left(\sum\limits_{k=1}^n f_1\left(\frac{k}{n}\right) f_2\left(\frac{k}{n}\right) \frac{1}{n} \right)^2.$$
	
	Now consider the partition $\pi_3 = \left\{\left\{1,2,3,4\right\}\right\}.$ This partition will correspond to six different options for traversing through vertices
	
	$$S_3 = \left\{\left\{1,2,3,4\right\}\right\}, m_3 = 3  ~ (1<2,~ 2<3,~ 3<4 ),$$
	
	$$S_4 = \left\{\left\{1,4,3,2\right\}\right\}, m_4 = 1 ~ (1<4 ),$$
	
	$$S_5 = \left\{\left\{1,3,4,2\right\}\right\}, m_5 = 2 ~ (1<3,~ 3<4 ),$$
	
	$$S_6 = \left\{\left\{1,2,4,3\right\}\right\}, m_6 = 2 ~ (1<2,~ 2<4),$$
	
	$$S_7 = \left\{\left\{1,3,2,4\right\}\right\}, m_7 = 2 ~ (1<3,~ 2<4 ),$$
	
	$$S_8 = \left\{\left\{1,4,2,3\right\}\right\}, m_8 = 2 ~ (1<4,~ 2<3) .$$
	
	The corresponding diagrams will look like this:
	
	\begin{center}
		\begin{tikzpicture}[node distance={20mm},main/.style = {draw, circle}] 
			\node[main] (1) {$1$}; 
			\node[main] (2) [right of=1] {$2$}; 
			\node[main] (3) [below of=1] {$3$};
			\node[main] (4) [below of=2] {$4$};
			
			\node[main] (5) [right of=2] {$1$}; 
			\node[main] (6) [right of=5] {$2$}; 
			\node[main] (7) [below of=5] {$3$};
			\node[main] (8) [below of=6] {$4$};
			
			\node[main] (9) [right of=6] {$1$}; 
			\node[main] (10) [right of=9] {$2$}; 
			\node[main] (11) [below of=9] {$3$};
			\node[main] (12) [below of=10] {$4$};
			
			\draw[->] (1) to (2);
			\draw[->] (2) to (3);
			\draw[->] (3) to node[midway, below] {$G_3$} (4);
			
			\draw[->] (5) to (8);
			\draw[->] (8) to node[midway, below] {$G_4$} (7);
			\draw[->] (7) to (6);
			
			\draw[->] (9) to (11);
			\draw[->] (11) to node[midway, below] {$G_5$} (12);
			\draw[->] (12) to (10);
			
		\end{tikzpicture}
	\end{center}
	\begin{center}
		\begin{tikzpicture}[node distance={20mm},main/.style = {draw, circle}] 
			\node[main] (1) {$1$}; 
			\node[main] (2) [right of=1] {$2$}; 
			\node[main] (3) [below of=1] {$3$};
			\node[main] (4) [below of=2] {$4$};
			
			\node[main] (5) [right of=2] {$1$}; 
			\node[main] (6) [right of=5] {$2$}; 
			\node[main] (7) [below of=5] {$3$};
			\node[main] (8) [below of=6] {$4$};
			
			\node[main] (9) [right of=6] {$1$}; 
			\node[main] (10) [right of=9] {$2$}; 
			\node[main] (11) [below of=9] {$3$};
			\node[main] (12) [below of=10] {$4$};
			
			\draw[->] (1) to (2);
			\draw[->] (2) to (4);
			\draw[->] (4) to node[midway, below] {$G_6$} (3);
			
			\draw[->] (5) to (7) node[below right]{$~~~~G_7$};
			\draw[->] (7) to (6);
			\draw[->] (6) to (8);
			
			\draw[->] (9) to (12);
			\draw[->] (12) to (10);
			\draw[->] (10) to (11) node[below right]{$~~~~G_8$};
		\end{tikzpicture}
	\end{center}

	Determine which term corresponds to each diagram:
	$$I(G_3) = I(G_4) = \sum\limits_{k=1}^n f_1^2 \left(\frac{k}{n}\right) f_2^2 \left(\frac{k}{n}\right) \frac{1}{n},$$
	$$I(G_5) = I(G_6) = I(G_7) = I(G_8) = -\sum\limits_{k=1}^n f_1^2 \left(\frac{k}{n}\right) f_2^2 \left(\frac{k}{n}\right) \frac{1}{n}.$$
	\par Hence, 
	$$E\left[:\varphi^2(f_1)::\varphi^2(f_2):\right] = \sum\limits_{\left\{G\right\}} I(G) = $$
	$$ = 2\left(\sum\limits_{k=1}^n f_1\left(\frac{k}{n}\right) f_2\left(\frac{k}{n}\right) \frac{1}{n} \right)^2 - 2\sum\limits_{k=1}^n f_1^2 \left(\frac{k}{n}\right) f_2^2 \left(\frac{k}{n}\right) \frac{1}{n}. $$
	
\end{example}

Now, let's prove theorem \ref{theorem_diagram_Bernuolli}.
\begin{proof} (An analog of Feynman diagrams for Bernoulli noise)

	Using the formula (2) one can get:
	\begin{gather}\label{4}
		\mathbb{E} \left[:\varphi^{n_1}(f_1): \cdot \ldots \cdot :\varphi^{n_N}(f_N):\right] = \nonumber \\
		= \mathbb{E}\Bigg[\Bigg(\sum\limits_{\pi \in \Pi_1} \prod\limits_{D \in \pi} \Big[\varphi^d(f_1) \lambda^{1-|D|} \mathbbm{1}_{(1-|D| \geqslant 0)} + \nonumber \\
		(-1)^{\frac{|D|}{2}} b_p \frac{1}{(2-|D|)!}\sum\limits_{k=1}^n f_1^d\left(\frac{k}{n}\right) \frac{1}{\sqrt{n}} \left( \lambda f\left(\frac{k}{n}\right) \frac{1}{\sqrt{n}}\right)^{2p-|D|} \mathbbm{1}_{(2p-|D| \geqslant 0)} \Big]\Bigg) \cdot  \nonumber \\
		\cdot \ldots \cdot \Bigg(\sum\limits_{\pi \in \Pi_N} \prod\limits_{D \in \pi} \Big[\varphi^d(f_N) \lambda^{1-|D|} \mathbbm{1}_{(1-|D| \geqslant 0)} + \nonumber \\
		(-1)^{\frac{|D|}{2}} b_p \frac{1}{(2-|D|)!}\sum\limits_{k=1}^n f_N^d\left(\frac{k}{n}\right) \frac{1}{\sqrt{n}} \left( \lambda f\left(\frac{k}{n}\right) \frac{1}{\sqrt{n}}\right)^{2p-|D|} \mathbbm{1}_{(2p-|D| \geqslant 0)} \Big]\Bigg)\Bigg]
	\end{gather}

	After multiplication, we will get the sum of all possible combinations of the partitions of the sets  $\left\{1_1, 1_2, \ldots, 1_{n_1}\right\}, \ldots, \left\{N_1, N_2, \ldots, N_{n_N}\right\}.$
	
	Calculating the expectation, all possible terms of the following form will occur:
	$$ \prod\limits_{s \in S} \sum\limits_{k=1}^n \prod\limits_{i \in s} f_i\left(\frac{k}{n}\right) \frac{1}{\sqrt{n}},$$ 
	where $S$ is a partition of $\left\{1_1, \ldots, 1_{n_1}, 2_1, \ldots, 2_{n_2}, \ldots, N_{n_N}\right\}$ on the blocks with even cardinality.

	Consider the term
	\begin{gather}\label{5}
		\prod\limits_{j=1}^N \prod\limits_{i=1}^{I_j} \sum\limits_{k=1}^n f_j^{2k_{i,j}} \left(\frac{k}{n}\right) \frac{1}{\sqrt{n}} ~\cdot~ \prod\limits_{s \in S'} \sum\limits_{k=1}^n \prod\limits_{t \in s} f_t \left(\frac{k}{n}\right) \frac{1}{\sqrt{n}},
	\end{gather}
	with no terms of the form $\sum\limits_{k=1}^n f_i^m\left(\frac{k}{n}\right) \left(\frac{1}{\sqrt{n}}\right),~m > 1$ in the second part.
	Let us determine when calculating the mathematical expectation from which terms from (4), the term (5) can appear. Each such term corresponds to a certain combination of partitions. Since there are no sums of the form  $\sum\limits_{k=1}^n f_i^m\left(\frac{k}{n}\right) \left(\frac{1}{\sqrt{n}}\right),~m > 1$ in the second part, this term can only appear when calculating the mathematical expectation $\prod\limits_{s \in S'} \prod\limits_{t \in s} \varphi(f_t).$
	
	Let's fix a certain partition R for $:\varphi^{n_2}(f_2):, \ldots, :\varphi^{n_N}(f_N):~$, which gives, after calculating the mathematical expectation, the term (5). That is:
	$$R = R_2 \cup R_3 \cup \ldots \cup R_N, ~\text{where } R_i \in \Pi_i.$$

	Now consider all possible partition $R_1$ for $:\varphi^{n_1}(f_1):~$, which corresponds to the term~(5).
	
	There will be $I_1$ sums of the form $\sum\limits_{k=1}^n f_1^{2k_{1,j}} \left(\frac{k}{n}\right) \frac{1}{\sqrt{n}}$ in (5), and each of these sums could appear as a constant from (4), that corresponds to the partition $\left\{\left\{1, \ldots, 1\right\}\right\} $ or as an expectation of $\varphi^{2k_{1,j}}(f_1)$, that corresponds to the partition $\left\{\left\{1\right\}, \ldots \left\{1\right\}\right\}.$ 
	
	Let's define and numerate the types of partitions according to the number of sums, represented by the constant:
	
	0. $\left\{\left\{1\right\},\left\{1\right\}, \ldots \left\{1\right\}\right\} \cup R;$\\
	
	1. $\left\{\{\underbrace{1, \ldots, 1}_{2k_{1,i}}\} , \left\{1\right\},\left\{1\right\}, \ldots \left\{1\right\}\right\} \cup R;$\\
	
	2. $\left\{\{\underbrace{1, \ldots, 1}_{2k_{1,i}}\} , \{\underbrace{1, \ldots, 1} _{2k_{1,j}}\}, \left\{1\right\},\left\{1\right\}, \ldots \left\{1\right\}\right\} \cup R;$
	
	$\vdots$\\
	
	$I_1. \left\{\{\underbrace{1, \ldots, 1}_{2k_{1,i}}\}, \ldots, \{\underbrace{1, \ldots, 1} _{2k_{1,I_1}}\}, \left\{1\right\},\left\{1\right\}, \ldots \left\{1\right\}\right\} \cup R.\\$
	
	Now let's find the coefficient for (5) in the expression for\\
	$\mathbb{E} \left[:\varphi^{n_1}(f_1): \cdot \ldots \cdot:\varphi^{n_N}(f_N):\right]$ for fixed $R.$ To do this, we first find the coefficient with which (5) will appear when calculating the mathematical expectation of each type of the term and then sum it up.
	
	0. This partition in the formula (4) corresponds to the term \\ $\varphi^{n_1}(f_1)P_R(\varphi(f_2), \ldots \varphi(f_N))$, where $P_R$ is a is some polynomial corresponding to the partition $R.$ There will be $Q_{P_R}$ such terms in (4), where $Q_{P_R}$ is a constant appearing in (2). Let's determine the coefficient for the term (5) when calculating the mathematical expectation from $Q_{P_R}\varphi^{n_1}(f_1) P_R (\varphi(f_2), \ldots, \varphi(f_N)).$ The following partition will correspond to the term(\ref{5}) in the formula (1):
	$$\{\underbrace{\left\{1, \ldots, 1\right\} }_{2k_{1,i}}, \ldots, \underbrace{\left\{1, \ldots, 1\right\} }_{2k_{1,I_1}}, \} \cup S'.$$
	
	Then the coefficient is:
	$$a_0 = Q_{P_R} \cdot Q_{S'} \cdot C_{n_1}^{2k_{1,1}} \cdot C_{n_1 - 2k_{1,1}}^{2k_{1,2}} \cdot \ldots \cdot C_{n_1 - \sum_{i=1}^{I_1 - 1} 2k_{1,i}}^{2k_{1,I_1}} \cdot \prod\limits_{i=1}^{I_1} \left((-1)^{k_{1,i} + 1} b_{|2k_{1,i}|}\right),$$
	where $Q_{S'} $ is a coefficient corresponding to the partition $S'$ in the formula (1), \\ $C_{n_1 - 2k_{1,1}}^{2k_{1,2}} \cdot \ldots \cdot C_{n_1 - \sum_{i=1}^{I_1 - 1} 2k_{1,i}}^{2k_{1,I_1}} $the number of options to group elements in $I_1$ sets with cardinalities $2k_{1,1} \ldots 2k_{1,I_1},$ and $b_{|2k_{1,i}|} $ is a coeficient corresponding to $\{\underbrace{1, \ldots, 1}_{2k_{1,i}}\} $ in formula~(1). Denote 
	$$Q := Q_{P_R} \cdot Q_{S'} \cdot C_{n_1}^{2k_{1,1}} \cdot C_{n_1 - 2k_{1,1}}^{2k_{1,2}} \cdot \ldots \cdot C_{n_1 - \sum_{i=1}^{I_1 - 1} 2k_{1,i}}^{2k_{1,I_1}} \cdot \prod\limits_{i=1}^{I_1} \left( b_{|2k_{1,i}|}\right).$$
	
	Then 
	$$a_0 = Q (-1)^{\sum\limits_{i=1}^{I_1} k_{1,i}~ + I_1}. $$
	
	1. Let's fix $i$. This partition corresponds to
	
	$$\sum\limits_{k=1}^n f_1^{2k,i} \left(\frac{k}{n}\right) \frac{1}{\sqrt{n}} \cdot \varphi^{n_1 - 2k_{1,i}}(f_1) \cdot P_R(\varphi(f_2), \ldots \varphi(f_N)).$$
	
	There will be $$Q_{P_R} \cdot (-1)^{k_{1,i}} b_{|2k_{1,i}|} \cdot C_{n_1}^{2k_{1,i}}$$ such terms in (5),
	where $ (-1)^{k_{1,i}} b_{|2k_{1,i}|} \cdot C_{n_1}^{2k_{1,i}}$ is a coefficient from (2) which corresponds to~$\{\underbrace{1, \ldots, 1}_{2k_{1,i}}\}.$
	
	Now let's determine the coefficient for (5) in expectation of
	$$Q_{P_R} \cdot (-1)^{k_{1,i}} b_{|2k_{1,i}|} \cdot C_{n_1}^{2k_{1,i}} \sum\limits_{k=1}^n f_1^{2k,i} \left(\frac{k}{n}\right) \frac{1}{\sqrt{n}} \cdot \varphi^{n_1 - 2k_{1,i}}(f_1) \cdot P_R(\varphi(f_2), \ldots \varphi(f_N)).$$
	
	The following term will correspond to the term (5) in expression (1):
	$$\{\underbrace{\left\{1, \ldots, 1\right\} }_{2k_{1,i}}, \ldots, \underbrace{\left\{1, \ldots, 1\right\} }_{2k_{1,I_1}}, \} \cup S'.$$
	
	Therefore, the coefficient will be as follows:
	$$a_1^i = Q_{P_R} \cdot (-1)^{k_{1,i}} b_{|2k_{1,i}|} \cdot C_{n_1}^{2k_{1,i}} \cdot \prod\limits_{j=1, j \neq i}^{I_1} \left((-1)^{k_{1,j} + 1} b_{|2k_{1,j}|}\right) \cdot C_{n_1 - 2k_{1,i}}^{2k_{1,2}} \cdot \ldots \cdot C_{n_1 - \sum_{i=1}^{I_1 - 1} 2k_{1,i}}^{2k_{1,I_1}} = $$
	$$ = (-1)^{\sum\limits_{i=1}^{I_1} k_{1,i}~ + I_1 -1} \cdot Q.$$
	
	As we can see, the coefficient $a_1^i $ does not depend on $i$. Given that $C_{I_1}^1 $ is the number of options to choose $i$, we get:
	$$a_1 = Q C_{I_1}^1(-1)^{\sum\limits_{i=1}^{I_1} k_{1,i}~ + I_1 - 1} $$
	
	Similarly, we get:	 
	
	$$a_m = Q C_{I_1}^m (-1)^{\sum\limits_{i=1}^{I_1} k_{1,i}~ + I_1 - m}, ~ m=\overline{1,I_1}$$
	
	Therefore, the general coefficient for the term (\ref{5}) in the expression for\\
	$\mathbb{E} \left[:\varphi^{n_1}(f_1): \cdot \ldots \cdot:\varphi^{n_N}(f_N):\right]$ for fixed $R$ will be equal:
	
	$$A = \sum\limits_{m=0}^{I_1} a_m = Q \sum\limits_{m=0}^{I_1} C_{I_1}^m (-1)^{\sum\limits_{i=1}^{I_1} k_{1,i}~ + I_1 - m} = 0$$
	
	So, if the term contains a sum of the form $\sum\limits_{k=1}^{n} f_i^m \left(\frac{k}{n}\right) \frac{1}{\sqrt{n} },$ then such a term will appear in $\mathbb{E} \left[:\varphi^{n_1}(f_1): \cdot \ldots \cdot:\varphi^{n_N}(f_N):\right]$ with zero coefficient. This confirms the assumption that traversals in the diagrams cannot be performed on vertices corresponding to the same $f_i.$
	In this case, $\mathbb{E} \left[:\varphi^{n_1}(f_1): \cdot \ldots \cdot:\varphi^{n_N}(f_N):\right]$ will be determined by terms from $\mathbb{E} \left[\varphi^{n_1}(f_1) \cdot \ldots \cdot \varphi^{n_N}(f_N)\right],$ where there are no sums of the form $\sum\limits_{k=1}^{n} f_i^m \left(\frac{k}{n}\right) \frac{1}{\sqrt{n}}.$
	
	Hence,
	\begin{gather}\label{6}
		\mathbb{E} \left[:\varphi^{n_1}(f_1): \cdot \ldots \cdot:\varphi^{n_N}(f_N):\right] = \sum\limits_{\Pi'} \prod\limits_{D \in \Pi}   (-1)^{\frac{|D|}{2}}b_{|D|} \sum\limits_{k=1}^n \prod\limits_{d \in D} \left(f_d\left(\frac{k}{n}\right)\frac{1}{\sqrt{n}}\right)
	\end{gather}
	where $b_{|D|} =  \frac{ B_{|D|} 2^{|D|} (2^{|D|} - 1)}{|D|}$,  $\Pi'$ is a set of all possible partitions of the set $\left\{1_1, 1_2, \ldots 1_{n_1},  \ldots  N_{n_N}\right\} $ on blocks with even cardinality, which does not contain sets $\left\{i_{j_1}, \ldots i_{j_l}\right\}.$ 
	
	Let's prove that (6) is equal to (3).
	
	Consider a factor from (\ref{3}) corresponding to a certain diagram $G$ and a certain element $s \in S$ 
	$$s = \left\{s_1, s_2, \ldots, s_{2t}\right\}$$
	
	At the same time, $$ \forall i \neq 1:s_1 < s_i,$$ since the traversal starts from the vertex with the smallest number.
	
	$\left\{\sigma(s)\right\}$ is the set of all permutations $\left\{s_2, \ldots , s_{2t}\right\},$
	
	$l = m-1 $ is the number of "ascents" in $\left\{s_2, \ldots, s_{2t}\right\}.$
	
	Let $F_s = \left\{f_i^{r_j}: \psi(f_i^{r_j}) \in s\right\}.$ It can be noted that $F_s = F_{\left\{s_1\right\} \cup \sigma(s)},$ therefore all sets of the form $\left\{s_1\right\} \cup \sigma(s)$ generate the same factor
	$$\sum\limits_{k=1}^n \prod_{f_i \in F_s} f_i \left(\frac{k}{n}\right) \frac{1}{\sqrt{n}},$$
	the difference will be only in the coefficient $(-1)^l$, which is determined by the number of ascents in $\sigma(s).$
	
	To find the coefficient for the factor $$\sum\limits_{k=1}^n \prod_{f_i \in F_s} f_i \left(\frac{k}{n}\right) \frac{1}{\sqrt{n}}$$ in the expression for $E \left[:\varphi^{n_1}(f_1): \cdot \ldots \cdot:\varphi^{n_N}(f_N):\right]$ we need to sum the coefficients corresponding to all the permutations  $\sigma(s)$:
	$$\sum\limits_{l=0}^{|s| - 1} (-1)^{l}E(|s| - 1, l) = \frac{ B_{|s|} 2^{|s|} (2^{|s|} - 1)}{|s|},$$
	
	where $E(|s| - 1, l) $ is an Eulerian number, which is a number of permutations of $|s| - 1$ elements with $l $ ascents, and $B_{|s|}$ are Bernoulli numbers \cite{Euler}.

	Define $D = \left\{i_j: \psi(f_i^j) \in s\right\}.$ Since $\psi $ is a bijective mapping, therefore $|D| = |s|.$
	
	So, applying (3), we got the same coefficient at $$\sum\limits_{k=1}^n \prod_{d \in D} f_d\left(\frac{n}{k }\right) \frac{1}{\sqrt{n}},$$ as when applying (6).

\end{proof}

\section{Limit behavior of Feynman diagrams for Bernoulli noise}
	
	Bernoulli noise was constructed as an approximation of Gaussian white noise. To prove this, let's show the convergence of elements of Bernoulli noise to corresponding elements of Gaussian white noise. 
	
	\begin{theorem}[Weak convergence to Gaussian white noise]\label{theorem_convergence_bernuolli_to_gaussian}
		
		$\forall f \in C\left(\left[0,1\right]\right):$
		
		$$\varphi(f) \Longrightarrow N(0, \|f\|^2), n \to \infty $$
	\end{theorem}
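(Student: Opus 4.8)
The plan is to prove convergence in distribution through characteristic functions together with L\'evy's continuity theorem, reusing the computation already carried out in the proof of Lemma~\ref{lem_moments_powers}. Since the $\varepsilon_k$ are independent and each takes the values $\pm 1$ with probability $1/2$, one has $E\exp(it c\,\varepsilon_k)=\cos(tc)$, so independence gives the characteristic function of $\varphi(f)$ in closed form,
\[
E e^{it\varphi(f)} = \prod_{k=1}^{n}\cos\!\left(\frac{t\,f(k/n)}{\sqrt{n}}\right).
\]
Taking logarithms and inserting the same $\log\cos$ expansion with Bernoulli numbers used in Lemma~\ref{lem_moments_powers} yields
\[
\log E e^{it\varphi(f)} = -\sum_{k=1}^{n}\sum_{p=1}^{\infty}\frac{|B_{2p}|2^{2p}(2^{2p}-1)}{2p\,(2p)!}\left(\frac{t\,f(k/n)}{\sqrt{n}}\right)^{2p}.
\]
Thus it suffices to show that this exponent tends to $-\tfrac12 t^2\|f\|^2$ for every fixed $t\in\mathbb{R}$, where $\|f\|^2=\int_0^1 f(x)^2\,dx$.

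First I would isolate the leading term $p=1$. Its coefficient $\tfrac{|B_2|2^2(2^2-1)}{2\cdot 2!}$ equals $\tfrac12$, so this term contributes exactly
\[
-\frac{t^2}{2}\cdot\frac{1}{n}\sum_{k=1}^{n}f\!\left(\frac{k}{n}\right)^{2}.
\]
Because $f$ is continuous on $[0,1]$, the average $\tfrac1n\sum_{k=1}^n f(k/n)^2$ is a Riemann sum of $f^2$ and hence converges to $\int_0^1 f(x)^2\,dx=\|f\|^2$ as $n\to\infty$. This already produces the desired limit, provided the terms with $p\ge 2$ are shown to vanish.

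The remaining step is to control that tail. Setting $M=\max_{x\in[0,1]}|f(x)|$, for $n$ large enough that $|t|M/\sqrt{n}<\pi/2$ the $\log\cos$ series is valid at every argument, and each summand of index $p$ is bounded by $C_p\,(|t|M/\sqrt{n})^{2p}$ with $C_p=\tfrac{|B_{2p}|2^{2p}(2^{2p}-1)}{2p\,(2p)!}$; since $\sum_{p\ge2}C_p x^{2p}=O(x^4)$ near $x=0$, summing over the $n$ indices $k$ gives a contribution of order $n\cdot(|t|M/\sqrt{n})^{4}=O(1/n)\to 0$. Combining this with the leading term shows $E e^{it\varphi(f)}\to\exp(-\tfrac12 t^2\|f\|^2)$, the characteristic function of $N(0,\|f\|^2)$, and L\'evy's continuity theorem delivers the claimed weak convergence.

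The main obstacle is the uniform handling of the higher-order terms: one must simultaneously guarantee that the arguments $t\,f(k/n)/\sqrt{n}$ stay inside the domain of validity of the $\log\cos$ expansion, and that the full remainder summed over all $k$ and all $p\ge 2$ is $o(1)$. Both requirements reduce to the boundedness and uniform continuity of $f$ on the compact interval $[0,1]$ — the former controls the tail estimate, the latter secures the Riemann-sum convergence of the leading term — so no delicate estimate is needed beyond these elementary bounds.
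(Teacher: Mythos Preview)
Your proposal is correct and follows essentially the same route as the paper: compute the characteristic function as a product of cosines, expand $\log\cos$ via the Bernoulli-number series, identify the $p=1$ contribution as a Riemann sum for $\tfrac12 t^2\|f\|^2$, and bound the $p\ge 2$ terms using the uniform bound $|f|\le M$. If anything, you are a bit more careful than the paper in explicitly noting the domain of validity of the $\log\cos$ expansion and in invoking L\'evy's continuity theorem.
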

	\begin{proof}
		We will show that the characteristic function of $\varphi(f)$ converges to the characteristic function of a random variable with normal distribution $N(0, \|f\|^2):$
		
		$$\mathbb{E} e^{it\varphi(f)} = \mathbb{E} \exp\left(it\sum\limits_{k=0}^n f\left(\frac{k}{n}\right)\frac{\varepsilon_k}{\sqrt{n}}\right) = \prod\limits_{k=1}^n \mathbb{E} \exp\left(it f\left(\frac{k}{n}\right)\frac{\varepsilon_k}{\sqrt{n}}\right) = $$
		
		$$ = \prod\limits_{k=1}^n \frac{1}{2} \left(e^{itf\left(\frac{k}{n}\right)\frac{1}{\sqrt{n}}} + e^{-itf\left(\frac{k}{n}\right)\frac{1}{\sqrt{n}}}\right) = \prod\limits_{k=1}^n \cos\left(tf\left(\frac{k}{n}\right)\frac{1}{\sqrt{n}}\right) = $$
		$$ = \exp\left(\sum\limits_{k=0}^n \ln \cos \left(tf\left(\frac{k}{n}\right)\frac{1}{\sqrt{n}}\right)\right)$$
		
		Noting that $ln\left(cos(x)\right) = - \sum_{p = 1}^\infty b_p x^{2p},$ where $b_p = \frac{|B_{2p}|2^{2p}\left(2^{2p} -1\right)}{2p(2p)!}$ and $B_{2p} $ are Bernoulli numbers, one can get:
		$$\exp\left(\sum\limits_{k=0}^n \ln \cos \left(tf\left(\frac{k}{n}\right)\frac{1}{\sqrt{n}}\right)\right) = \exp\left(-\sum\limits_{k=0}^n \sum\limits_{p=1}^\infty b_p \left(t f\left(\frac{k}{n}\right)\frac{1}{\sqrt{n}}\right)^{2p}\right)$$
		
		Consider the term when $p \geqslant 2.$ Since $f$ is continuous on the closed interval, it is bounded:
		$$\exists M \in \mathbb{R}: \forall x \in [0,1]: ~ |f(x)| \leqslant M.$$
		
		Taking into account that fact:
		$$\sum\limits_{k=0}^n b_p t^{2p} f^{2p}\left(\frac{k}{n}\right)\frac{1}{n^p} \leqslant \sum\limits_{k=0}^n b_p t^{2p} M^{2p} \frac{1}{n^p} = b_p t^{2p} M^{2p} \frac{1}{n^{p-1}} \longrightarrow 0, n \to \infty.$$

		When $p=1$ the term $\sum\limits_{k=0}^n  \frac{1}{2} f^2\left(\frac{k}{n}\right)\frac{1}{n} $ is a Riemann sum for integral
		$$\frac{1}{2}\int\limits_0^1 f^2(s)ds.$$
		
		Hence:
		$$ e^{it\varphi(f)} \longrightarrow e^{-\frac{1}{2} t^2 \|f\|^2},$$
		$$\varphi(f) \Longrightarrow N(0, \|f\|^2), n \to \infty.  $$
	\end{proof}
	
	\begin{remark}\label{vector_convergence}
	    Similarly, it can be shown that
	    $$\left( \varphi(f_1), \ldots, \varphi(f_k) \right) \longrightarrow \left( f_1(\xi), \ldots, f_k (\xi) \right),~n \to \infty. $$
	\end{remark}

	We can expect that Feynman diagrams for Bernoulli noise will converge to the Feynman diagrams for Gaussian case~\cite{reed_simon1}.

	\begin{lemma}
		Consider a diagram $G $ and corresponding partition $S.$ If there exists such  $s \in S,$ that $|s| > 2,$ then $$I(G) \longrightarrow 0, n \to \infty $$
	\end{lemma}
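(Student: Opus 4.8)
The plan is to estimate the factor that each block $s \in S$ contributes to the product $I(G)$, and to show that a single block with $|s| > 2$ forces the entire product to vanish in the limit while the remaining block factors stay bounded.

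First I would recall from the statement of Theorem~\ref{theorem_diagram_Bernuolli} that $I(G)$ factors over the blocks of $S$, the block $s$ contributing
$$(-1)^{m-1}\sum_{k=1}^{n}\prod_{i:\psi(f_i)\in s} f_i\!\left(\tfrac{k}{n}\right)\tfrac{1}{\sqrt{n}}.$$
Since each $f_i$ is continuous on $[0,1]$, it is bounded, say $|f_i(x)|\le M$ for all $i$ and all $x\in[0,1]$. The product over the vertices in $s$ contains exactly $|s|$ factors of $\tfrac{1}{\sqrt{n}}$, so each summand is bounded in absolute value by $M^{|s|}\,n^{-|s|/2}$; summing the $n$ terms and using $|(-1)^{m-1}|=1$ yields the bound
$$\left|\,(-1)^{m-1}\sum_{k=1}^{n}\prod_{i:\psi(f_i)\in s} f_i\!\left(\tfrac{k}{n}\right)\tfrac{1}{\sqrt{n}}\,\right|\le M^{|s|}\, n^{1-|s|/2}.$$

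Next I would invoke the even-cardinality condition built into the construction of the diagrams: every block has even size, so $|s|\ge 2$ for all $s$, hence $1-|s|/2\le 0$ and every block factor is uniformly bounded (by $M^{|s|}$) in $n$. For the distinguished block with $|s|>2$, evenness gives $|s|\ge 4$, so $1-|s|/2\le -1$ and its factor is $O(n^{-1})$, which tends to $0$. Thus $I(G)$ is a product of finitely many bounded factors with one factor tending to $0$, and therefore $I(G)\to 0$ as $n\to\infty$.

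I do not expect a genuine obstacle here: the argument is a power-counting estimate weighing the $n^{-|s|/2}$ coming from the $|s|$ copies of $\tfrac{1}{\sqrt{n}}$ against the $n$ summands in $\sum_{k=1}^{n}$. The only point requiring care is the use of the even-cardinality of the blocks, which is exactly what guarantees both that the non-distinguished factors are genuinely bounded and that $|s|>2$ upgrades to $|s|\ge 4$, producing the strictly negative exponent $1-|s|/2\le -1$ responsible for the decay.
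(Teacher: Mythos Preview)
Your proposal is correct and follows essentially the same power-counting argument as the paper: bound each $f_i$ by a constant $M$ using continuity on $[0,1]$, observe that the block factor for $s$ is dominated by $M^{|s|}n^{1-|s|/2}$, and conclude that a block with $|s|>2$ makes that factor vanish. You are in fact slightly more careful than the paper, which shows only that the distinguished factor tends to $0$ and then jumps to $I(G)\to 0$; your explicit use of the even-cardinality rule to guarantee $|s|\ge 2$ for every block, hence uniform boundedness of all remaining factors, closes a small gap the paper leaves implicit.
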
 
	\begin{proof}
		Suppose there exists such $s \in S,$ that $|s| > 2.$ Then corresponding multiplier in $I(G),$ according to formula \ref{2}, equals to
		$$(-1)^{m-1} \sum\limits_{k=1}^n \prod\limits_{i: \psi(f_i) \in s} f_i\left(\frac{k}{n}\right) \frac{1}{\sqrt{n}}$$
		
		 $$\forall i: ~ f_i \in C([0,1]),$$
		so $$\forall i ~ \exists M_i \in \mathbb{R}: \forall x \in [0,1]: |f_i(x)| \leqslant M_i$$
		
		Consider
		$$M = \max\limits_{i} M_i$$
		
		Then
		$$\left|(-1)^{m-1} \sum\limits_{k=1}^n \prod\limits_{i: \psi(f_i) \in s} f_i\left(\frac{k}{n}\right) \frac{1}{\sqrt{n}}\right| \leqslant \sum\limits_{k=1}^n \prod\limits_{i: \psi(f_i) \in s} \left|f_i\left(\frac{k}{n}\right)\right| \frac{1}{\sqrt{n}} \leqslant $$
		$$\leqslant \sum\limits_{k=1}^n M^{|s|} \frac{1}{n^{\frac{|s|}{2}}} = M^{|s|} \frac{1}{n^{\frac{|s|}{2} - 1}} \longrightarrow 0, ~n \to \infty$$
		
		Hence
		$$I(G) = \prod\limits_{s \in S} (-1)^{m-1} \sum\limits_{k=1}^n \prod\limits_{i: \psi(f_i) \in s} f_i\left(\frac{k}{n}\right) \frac{1}{\sqrt{n}} \longrightarrow 0, ~n \to \infty.$$
	\end{proof}

	Therefore, when $n$ goes to infinity, only diagrams with partition into pairs will have a non-zero impact on the expectation. This exactly describes Feynman diagrams for Gaussian random variables. So, we get
	$$\mathbb{E} \left[:\varphi^{n_1}(f_1): \cdot \ldots \cdot :\varphi^{n_N}(f_N):\right] \longrightarrow  \mathbb{E} \left[:(f_1, \xi)^{n_1}: \cdot \ldots \cdot :(f_N, \xi)^{n_N}:\right], n \to \infty.$$

	\begin{example}\label{ex_diagram_wick_bernuolli_to_gaussian}
		From the example \ref{ex_diagram_wick_bernuolli} for
		$$\mathbb{E}\left[:\varphi^2(f_1)::\varphi^2(f_2):\right]$$
		we got the following diagrams:
		\begin{figure}[h!]
			\centering
			\begin{tikzpicture}[node distance={20mm},main/.style = {draw, circle}] 
				\node[main] (1) {$1$}; 
				\node[main] (2) [right of=1] {$2$}; 
				\node[main] (3) [below of=1] {$3$};
				\node[main] (4) [below of=2] {$4$};
				
				\node[main] (5) [right of=2] {$1$}; 
				\node[main] (6) [right of=5] {$2$}; 
				\node[main] (7) [below of=5] {$3$};
				\node[main] (8) [below of=6] {$4$};
				
				\node[main] (9) [right of=6]{$1$}; 
				\node[main] (10) [right of=9] {$2$}; 
				\node[main] (11) [below of=9] {$3$};
				\node[main] (12) [below of=10] {$4$};
				
				\node[main] (13) [right of=10] {$1$}; 
				\node[main] (14) [right of=13] {$2$}; 
				\node[main] (15) [below of=13] {$3$};
				\node[main] (16) [below of=14] {$4$};
				
				\draw (1) to (3) node[below right]{$~~~G_1$};
				\draw (2) to (4);
				
				\draw (5) to (8);
				\draw (6) to (7) node[below right]{$~~~G_2$};	
				
				\draw[->, dashed] (9) to (10);
				\draw[->, dashed] (10) to (11);
				\draw[->, dashed] (11) to node[midway, below] {$G_3$} (12);
				
				\draw[->, dashed] (13) to (16);
				\draw[->, dashed] (16) to node[midway, below] {$G_4$} (15);
				\draw[->, dashed] (15) to (14);	
				
			\end{tikzpicture}
			
			\begin{tikzpicture}[node distance={20mm},main/.style = {draw, circle}] 
				\node[main] (1) {$1$}; 
				\node[main] (2) [right of=1] {$2$}; 
				\node[main] (3) [below of=1] {$3$};
				\node[main] (4) [below of=2] {$4$};
				
				\node[main] (5) [right of=2] {$1$}; 
				\node[main] (6) [right of=5] {$2$}; 
				\node[main] (7) [below of=5] {$3$};
				\node[main] (8) [below of=6] {$4$};
				
				\node[main] (9) [right of=6]{$1$}; 
				\node[main] (10) [right of=9] {$2$}; 
				\node[main] (11) [below of=9] {$3$};
				\node[main] (12) [below of=10] {$4$};
				
				\node[main] (13) [right of=10] {$1$}; 
				\node[main] (14) [right of=13] {$2$}; 
				\node[main] (15) [below of=13] {$3$};
				\node[main] (16) [below of=14] {$4$};
				
				\draw[->, dashed] (1) to (3);
				\draw[->, dashed] (3) to node[midway, below] {$G_5$} (4);
				\draw[->, dashed] (4) to (2);
				
				\draw[->, dashed] (5) to (6);
				\draw[->, dashed, dashed] (6) to (8);
				\draw[->, dashed] (8) to node[midway, below] {$G_6$} (7);
				
				\draw[->, dashed] (9) to (11) node[below right]{$~~~~~G_7$};
				\draw[->, dashed] (11) to (10);
				\draw[->, dashed] (10) to (12);
				
				\draw[->, dashed] (13) to (16);
				\draw[->, dashed] (16) to (14);
				\draw[->, dashed] (14) to (15) node[below right]{$~~~~~G_8$};

			\end{tikzpicture}
			\caption{Feynman's diagrams for Bernoulli case}
		\end{figure}
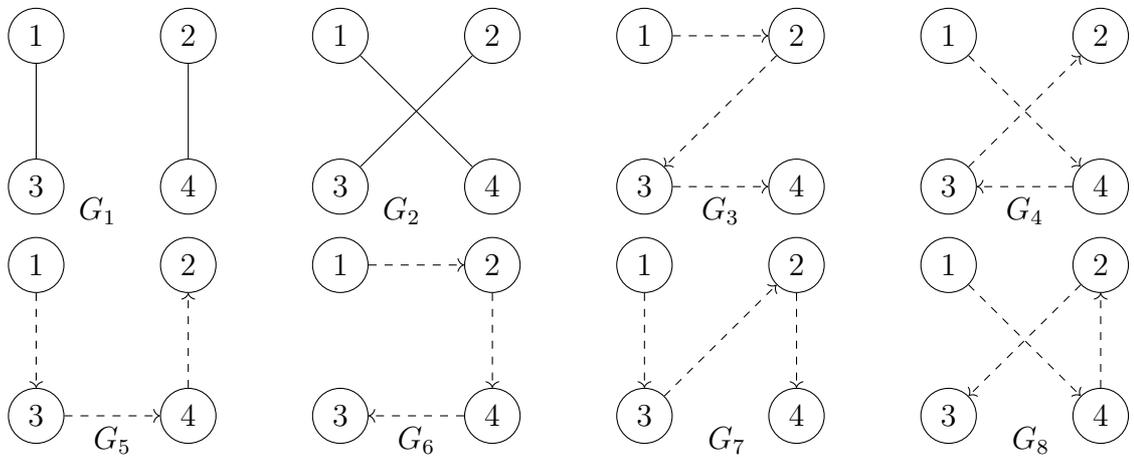
		
		Diagrams with dashed arrows will have zero impact because $ |s| > 2$.
		
		In the first two diagrams, we connect the vertices corresponding to one $f_i$, but keep $n_i$ different edges:
		
		\begin{figure}[h!]
			\centering
			\begin{tikzpicture}[node distance={20mm},main/.style = {draw, circle}] 
				\node[main] (1) {$1$}; 
				\node[main] (2) [right of=1] {$2$}; 
				\node[main] (3) [below of=1] {$3$};
				\node[main] (4) [below of=2] {$4$};
				
				\node[main] (5) [right of=2] {$1$}; 
				\node[main] (6) [right of=5] {$2$}; 
				\node[main] (7) [below of=5] {$3$};
				\node[main] (8) [below of=6] {$4$};
				
				\draw (1) to (3) node[below right]{$~~~G_1$};
				\draw (2) to (4);
				
				\draw (5) to (8);
				\draw (6) to (7) node[below right]{$~~~G_2$};

			\end{tikzpicture}

		$$\Downarrow$$
		
			\begin{tikzpicture}[node distance={20mm},main/.style = {draw, circle}] 
				\node[main] (1) {$f_1$} ; 
				\node[main] (2) [below of=1] {$f_2$};
				\node[main] (3) [right of=1] {$f_1$};
				\node[main] (4) [below of=3] {$f_2$};
				
				\draw (1) to [out=240,in=120,looseness=1]node[near start,  above left ]{$1$}  node[near end,  below left ]{$3$} (2);
				\draw (1) to [out=300,in=60,looseness=1]node[near start,  above right ]{$2$}  node[near end,  below right ]{$4$} (2);
				
				\draw  (3)  to [out=240,in=120,looseness=1]node[near start,  above left ]{$1$}  node[near end,  below left ]{$4$} (4);
				\draw (3)  to [out=300,in=60,looseness=1]node[near start,  above right ]{$2$}  node[near end,  below right ]{$3$}  (4);
				
			\end{tikzpicture}
			\caption{Convergence of diagrams for Bernoulli case to Feynman's diagrams for Gaussian case}
		\end{figure}
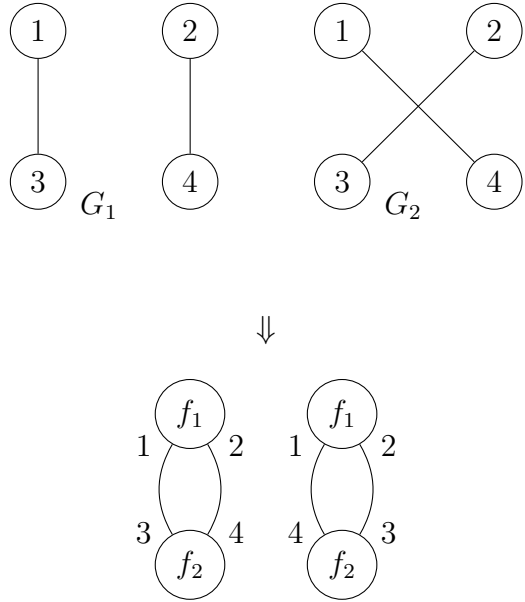 
		
	\end{example}
\newpage
	We obtained exactly the Feynman diagrams for the Gaussian case.
	
\section{Convergence of k-linear forms}

It is known that in the Gaussian case, Wick's powers are the system of orthogonal polynomials and are used to construct a basis in the space of square-integrable functionals from Gaussian white noise. Let's show that Wick's powers in Bernoulli case are not orthogonal.

\begin{example}\label{counterex_orthogonality_bernuolli}

	Using Feynman diagrams for Bernoulli case, calculate 
	$$E\left[:\varphi^3(f): \cdot :\varphi(f):\right].$$
	
	The corresponding diagrams:
	
	\begin{center}
		\begin{tikzpicture}[node distance={15mm},main/.style = {draw, circle}] 
			\node[main] (1) {$1$}; 
			\node[main] (2) [right of=1] {$2$}; 
			\node[main] (3) [right of=2] {$3$};
			\node[main] (4) [below of=2] {$4$};
			
			\node[main] (5) [right of=3] {$1$}; 
			\node[main] (6) [right of=5] {$2$}; 
			\node[main] (7) [right of=6] {$3$};
			\node[main] (8) [below of=6] {$4$};
			
			\node[main] (9) [right of=7] {$1$}; 
			\node[main] (10) [right of=9] {$2$}; 
			\node[main] (11) [right of=10] {$3$};
			\node[main] (12) [below of=10] {$4$};
			
			\draw[->] (1) to (2);
			\draw[->] (2) to (3);
			\draw[->] (3) to (4);
			
			\draw[->] (5) to (8);
			\draw[->] (8) to (7);
			\draw[->] (7) to (6);
			
			\draw[->] (9) to [out=60,in=120,looseness=1] (11);
			\draw[->] (11) to (12);
			\draw[->] (12) to (10);
			
		\end{tikzpicture}
	\end{center}
	\begin{center}
		\begin{tikzpicture}[node distance={15mm},main/.style = {draw, circle}] 
			\node[main] (1) {$1$}; 
			\node[main] (2) [right of=1] {$2$}; 
			\node[main] (3) [right of=2] {$3$};
			\node[main] (4) [below of=2] {$4$};
			
			\node[main] (5) [right of=3] {$1$}; 
			\node[main] (6) [right of=5] {$2$}; 
			\node[main] (7) [right of=6] {$3$};
			\node[main] (8) [below of=6] {$4$};
			
			\node[main] (9) [right of=7] {$1$}; 
			\node[main] (10) [right of=9] {$2$}; 
			\node[main] (11) [right of=10] {$3$};
			\node[main] (12) [below of=10] {$4$};
			
			\draw[->] (1) to (2);
			\draw[->] (2) to (4);
			\draw[->] (4) to (3);
			
			\draw[->] (5) to [out=60,in=120,looseness=1] (7);
			\draw[->] (7) to (6);
			\draw[->] (6) to (8);
			
			\draw[->] (9) to (12);
			\draw[->] (12) to (10);
			\draw[->] (10) to (11);
		\end{tikzpicture}
	\end{center}
	
	$$I(G_1) = I(G_2) = \sum\limits_{k=1}^n f^4 \left(\frac{k}{n}\right)  \frac{1}{n},$$
	$$I(G_3) = I(G_4) = I(G_5) = I(G_6) = -\sum\limits_{k=1}^n f^4 \left(\frac{k}{n}\right)  \frac{1}{n}.$$
	
	So, we get: 
	$$E\left[:\varphi^3(f)::\varphi(f):\right] = \sum\limits_{\left\{G\right\}} I(G) = -2\sum\limits_{k=1}^n f^4 \left(\frac{k}{n}\right)  \frac{1}{n} \neq 0.$$

\end{example}

	\begin{remark}
	    Wick's powers for Bernoulli case are not orthogonal, in contrast to the Gaussian case.
	\end{remark}

Let $\left\{ \varepsilon_n, n \geq 1 \right\}$ be a sequence of independent Bernoulli random variables. Now let's define the following polynomials 
$$A_k^n \left( \overrightarrow{\varepsilon} \right) = \frac{1}{n^{\frac{k}{2}}} \sum\limits_{i_1 \neq \ldots \neq i_n} f \left( \frac{i_1}{n}, \ldots, \frac{i_k}{n} \right) \varepsilon_{i_1} \ldots \varepsilon_{i_k}, ~1 \leq i_j \leq n. $$

\begin{lemma}
    $\left\{A_k^n \left( \overrightarrow{\varepsilon}\right), 1 \leq k \leq n  \right\}$ is a system of orthogonal polynomials. 
\end{lemma}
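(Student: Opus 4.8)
The plan is to establish orthogonality in the direct, elementary sense: show that $E\left[A_k^n A_l^n\right] = 0$ whenever $k \neq l$, working term by term and using only the moment structure of the Bernoulli variables. The sole facts I need are that the $\varepsilon_i$ are independent with $E\varepsilon_i = 0$ and $E\varepsilon_i^2 = 1$; combining these gives $E\left[\prod_m \varepsilon_m^{a_m}\right] = \prod_m E\varepsilon_m^{a_m}$, which equals $1$ when every exponent $a_m$ is even and equals $0$ as soon as a single exponent is odd. (Here I read the index set $i_1 \neq \ldots \neq i_k$ as $k$ pairwise distinct labels in $\{1,\ldots,n\}$, so each $A_k^n$ is in fact a multilinear form of degree $k$.)

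Fixing $k \neq l$, say $k < l$, I would expand the product into a double sum:
\[
E\left[A_k^n A_l^n\right] = \frac{1}{n^{(k+l)/2}} \sum_{\substack{i_1 \neq \cdots \neq i_k \\ j_1 \neq \cdots \neq j_l}} f\!\left(\tfrac{i_1}{n}, \ldots, \tfrac{i_k}{n}\right) f\!\left(\tfrac{j_1}{n}, \ldots, \tfrac{j_l}{n}\right) E\left[\varepsilon_{i_1}\cdots\varepsilon_{i_k}\,\varepsilon_{j_1}\cdots\varepsilon_{j_l}\right].
\]
The key observation is a multiplicity count. Because $i_1,\ldots,i_k$ are pairwise distinct and $j_1,\ldots,j_l$ are pairwise distinct, each label $m \in \{1,\ldots,n\}$ occurs in the monomial $\varepsilon_{i_1}\cdots\varepsilon_{i_k}\varepsilon_{j_1}\cdots\varepsilon_{j_l}$ with total multiplicity at most $2$: at most once from the $i$-block and at most once from the $j$-block. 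By the moment formula the expectation is nonzero only if every occurring label appears with even multiplicity, hence exactly twice, which forces every $i_a$ to coincide with some $j_b$ and conversely, so that $\{i_1,\ldots,i_k\} = \{j_1,\ldots,j_l\}$ as sets. Two sets of distinct labels of sizes $k$ and $l$ coincide only if $k = l$, contradicting $k \neq l$. Therefore every summand vanishes and $E\left[A_k^n A_l^n\right] = 0$, which is precisely the asserted orthogonality.

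I expect no genuine obstacle here, since the argument is purely combinatorial; the one point that demands care is the bookkeeping of multiplicities, namely the reduction of ``even multiplicity'' to ``matched across the two blocks,'' which relies essentially on distinctness within each block capping every multiplicity at $2$. Two minor remarks complete the picture: the index sums are nonempty exactly when $k,l \leq n$, which is built into the range $1 \leq k \leq n$, and the vanishing is independent of the chosen kernels $f$ because it holds summand by summand. If one wishes to record also that each $A_k^n$ with $k \geq 1$ is orthogonal to the constants, the same parity argument gives $E\left[A_k^n\right] = 0$, since every monomial then contains $k$ labels each of odd (indeed, unit) multiplicity.
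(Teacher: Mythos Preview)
Your proof is correct and follows essentially the same approach as the paper: expand the double sum, use independence together with $E\varepsilon_i=0$, $E\varepsilon_i^2=1$ to see that a term survives only if every label occurs with even multiplicity, and then argue that distinctness within each block forces the two index sets to coincide, which is impossible when $k\neq l$. Your multiplicity bookkeeping (each label appears at most twice, so ``even'' means ``matched across blocks'') is stated more explicitly than in the paper, but the argument is the same.
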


\begin{proof}

    $$ \forall k \geq 1:~ \mathbb{E} A_k^n \left( \overrightarrow{\varepsilon} \right) = \mathbb{E} \frac{1}{n^{\frac{k}{2}}} \sum\limits_{i_1 \neq \ldots \neq 1_k} f \left( \frac{i_1}{n}, \ldots, \frac{i_k}{n}  \right) \varepsilon_{i_1} \ldots \varepsilon_{i_k} =$$
    $$ = \frac{1}{n^{\frac{k}{2}}} \sum\limits_{i_1 \neq \ldots \neq 1_k} f \left( \frac{i_1}{n}, \ldots, \frac{i_k}{n}  \right) \mathbb{E}\varepsilon_{i_1} \ldots \mathbb{E} \varepsilon_{i_k} = 0$$
    as a result of independence of $\left\{ \varepsilon_n \right\}.$
    
    $$\forall m \neq k:~\mathbb{E} A_k^n \left( \overrightarrow{\varepsilon} \right) A_m^n \left( \overrightarrow{\varepsilon} \right) = $$
    $$= \mathbb{E} \left[  \frac{1}{n^{\frac{k}{2}}} \sum\limits_{i_1 \neq \ldots \neq i_k} f \left( \frac{i_1}{n}, \ldots, \frac{i_k}{n}  \right) \varepsilon_{i_1} \ldots \varepsilon_{i_k}  \frac{1}{n^{\frac{m}{2}}} \sum\limits_{j_1 \neq \ldots \neq j_m} f \left( \frac{j_1}{n}, \ldots, \frac{j_m}{n}  \right) \varepsilon_{j_1} \ldots \varepsilon_{j_m}\right] = $$
    $$=  \frac{1}{n^{\frac{k+m}{2}}} \mathbb{E} \sum\limits_{i_1 \neq \ldots \neq i_k, j_1 \neq \ldots \neq j_m} f \left( \frac{i_1}{n}, \ldots, \frac{i_k}{n}  \right) f \left( \frac{j_1}{n}, \ldots, \frac{j_m}{n}  \right) \varepsilon_{i_1} \ldots \varepsilon_{i_k} \varepsilon_{j_1} \ldots \varepsilon_{j_m}$$
    
    In this sum, the term will give a non-zero contribution to the mathematical expectation only when the indices $\left\{ i_1, \ldots, i_k, j_1, \ldots, j_m \right\}$ are pairwise equal. That is, the term will have the following form:
    $$f^2 \left( \frac{i_1}{n}, \ldots, \frac{i_k}{n}  \right) \varepsilon_{i_1}^2 \ldots \varepsilon_{i_k}^2.$$
    
    But since $m \neq k$, this is impossible. So
    $$\mathbb{E} A_k^n \left( \overrightarrow{\varepsilon} \right) A_n^n \left( \overrightarrow{\varepsilon} \right) = 0.$$
    
    This shows that $\left\{A_k^n \left( \overrightarrow{\varepsilon}\right), 1 \leq k \leq n  \right\}$ is a system of orthogonal polynomials.
\end{proof}	

\begin{definition}
 
    	Suppose $f \in C\left([0,1]^k\right)$ and $\left\{ \psi_m, m \geq 1 \right\}$ is orthonormal basis in $L_2([0,1])$ where $\forall m: \psi_m \in C([0,1])$ and
	    $$f(\overline{x}) = \sum\limits_{m_1 \ldots m_k =1}^{\infty} c_{m_1,\ldots, m_k} \psi_{m_1}(x_1) \ldots \psi_{m_k}(x_k).$$
	    
	    An infinite-dimensional Hermite polynomial related to $f$ is defined as follows:
	    $$f ( \underbrace{\xi, \ldots, \xi}_k ) = \sum\limits_{m_1 \ldots m_k =1}^{\infty} c_{m_1,\ldots, m_k} (\psi_{m_1}, \xi) * \ldots * (\psi_{m_k}, \xi),$$
	    where $(\psi, \xi)$ is an element of Gaussian white noise, $(\psi_{m_1}, \xi) * \ldots * (\psi_{m_k}, \xi)$ is obtained from $(\psi_{m_1}, \xi) \cdot \ldots \cdot (\psi_{m_k}, \xi)$ in which $(\psi_{m_t}, \xi)^r$ is substituted by \textit{r}-th Hermite polynomial $H_r\left((\psi_{m_t}, \xi)\right).$ \cite{Dor}
\end{definition}

\begin{theorem}
    $\forall k \geq 1:$
    $$A_k^n \left( \overrightarrow{\varepsilon} \right) = \frac{1}{n^{\frac{k}{2}}} \sum\limits_{i_1 \neq \ldots \neq i_n} f \left( \frac{i_1}{n}, \ldots, \frac{i_k}{n} \right) \varepsilon_{i_1} \ldots \varepsilon_{i_k} \Longrightarrow f ( \underbrace{\xi, \ldots, \xi}_k ),~n \to \infty,$$
    where $f ( \underbrace{\xi, \ldots, \xi}_k )$ is Hermite polynomial related to $f$ from definition for $A_k^n \left( \overrightarrow{\varepsilon} \right)$.
\end{theorem}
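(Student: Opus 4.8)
The plan is to prove the weak convergence by reducing to elementary tensor products of the basis functions and then identifying the resulting off-diagonal multilinear sum with a product of one-dimensional Hermite polynomials evaluated at the jointly Gaussian limits $\varphi(\psi_m)$. Since $A_k^n$ is linear in $f$, and the limiting object $f(\xi,\dots,\xi)$ is linear in $f$ by its definition, I would first expand $f=\sum_{\vec m}c_{\vec m}\,\psi_{m_1}\otimes\cdots\otimes\psi_{m_k}$, truncate to a finite partial sum $f_M$, and analyse each elementary tensor separately. The tail $R_M=f-f_M$ is controlled through a second moment estimate: because distinct Bernoulli variables satisfy $\mathbb{E}\varepsilon_i=0$ and $\varepsilon_i^2=1$, one finds that $\mathbb{E}\big[A_k^n(R_M)^2\big]$ equals $k!$ times a Riemann sum for $\|R_M\|^2_{L_2}$, giving a bound $\mathbb{E}\big[A_k^n(R_M)^2\big]\le C_k\|R_M\|^2_{L_2}$ valid for all large $n$. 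Together with $\mathbb{E}\big[(f(\xi,\dots,\xi)-f_M(\xi,\dots,\xi))^2\big]\to0$, a converging-together (Slutsky) argument reduces the theorem to the case of a single elementary tensor.

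For a fixed tensor I group its factors by the distinct functions $\phi_1,\dots,\phi_q$ they involve, with multiplicities $r_1,\dots,r_q$, where $\sum_j r_j=k$. Writing $P_r^n(\phi)=n^{-r/2}\sum_{i_1\neq\cdots\neq i_r}\prod_{t}\phi(i_t/n)\varepsilon_{i_t}$ for the single-function off-diagonal sum, the key step is the recursion obtained by multiplying $P_{r-1}^n(\phi)$ by $\varphi(\phi)$ and splitting the new summation index according to whether it is fresh or coincides with one of the previous $r-1$ indices. Since $\varepsilon_i^2\equiv1$ holds deterministically, each coincidence contributes the deterministic factor $\frac{1}{n}\sum_i\phi(i/n)^2$ times a lower off-diagonal sum, so that
$$\varphi(\phi)\,P_{r-1}^n(\phi)=P_r^n(\phi)+(r-1)\Big(\tfrac{1}{n}\textstyle\sum_i\phi(i/n)^2\Big)P_{r-2}^n(\phi)+\rho_n,$$
where $\rho_n$, collecting the remaining coincidence configurations, is a fluctuating sum of strictly smaller order that vanishes in $L_2$. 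Because $\frac{1}{n}\sum_i\phi(i/n)^2\to\|\phi\|^2=1$, this is precisely the Hermite recursion $H_r(x)=xH_{r-1}(x)-(r-1)H_{r-2}(x)$, and induction gives $P_r^n(\phi)=H_r(\varphi(\phi))+o_P(1)$.

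The same bookkeeping applies across distinct functions, where a coincidence between an index attached to $\phi_s$ and one attached to $\phi_t$ with $s\neq t$ now produces the factor $\frac{1}{n}\sum_i\phi_s(i/n)\phi_t(i/n)\to\langle\phi_s,\phi_t\rangle=0$ by orthonormality. Hence all cross contractions vanish in the limit, the groups decouple, and for the elementary tensor $A_k^n=\prod_{j=1}^q H_{r_j}(\varphi(\phi_j))+o_P(1)$. By Remark \ref{vector_convergence} the vector $(\varphi(\phi_1),\dots,\varphi(\phi_q))$ converges weakly to the standard Gaussian vector $((\phi_1,\xi),\dots,(\phi_q,\xi))$, so Slutsky's lemma and the continuous mapping theorem yield $A_k^n\Longrightarrow\prod_{j=1}^q H_{r_j}((\phi_j,\xi))$. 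By the definition of the Wick product $*$, this limit is exactly $(\psi_{m_1},\xi)*\cdots*(\psi_{m_k},\xi)$, i.e. the elementary-tensor contribution to $f(\xi,\dots,\xi)$; summing over the truncation and letting $M\to\infty$ via the first paragraph finishes the proof.

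The main obstacle is the analytic justification for interchanging the truncation limit $M\to\infty$ with the sampling limit $n\to\infty$: I must bound $\mathbb{E}\big[A_k^n(R_M)^2\big]$ by a constant multiple of $\|R_M\|^2_{L_2}$ uniformly in $n$, which amounts to comparing the discrete Riemann sums $n^{-k}\sum_{\vec i}R_M(\vec i/n)^2$ to $\|R_M\|^2_{L_2}$ uniformly in both $M$ and $n$; for a merely continuous $f$ this requires some care (a modulus-of-continuity or supremum-norm estimate on the tails). Everything else is routine once one verifies that the excluded-configuration remainders $\rho_n$ are genuinely of lower order, which rests only on elementary Riemann-sum estimates and on the identity $\varepsilon_i^2=1$ special to the Bernoulli case, which makes the leading contraction coefficients deterministic and thereby sidesteps any moment-determinacy issue for the limiting chaos.
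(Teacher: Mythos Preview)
Your proposal is correct and shares the paper's two pillars---peeling off one $\varepsilon$-factor at a time using $\varepsilon_i^2\equiv1$ to generate the Hermite recursion, followed by a converging-together argument to pass from finite tensor sums $f_M$ to general $f$---but the organization differs. The paper runs a single induction on the total degree $k$, at each step proving \emph{joint} weak convergence of vectors $(A_{p_1}^n,\dots,A_{p_j}^n)$ with all $p_m\le k$; it peels off the factor $\psi_{m_k}$ globally and identifies the limit through the Wick recursion $(\psi_{m_k},\xi)\cdot(\,\cdots\,)-\sum_j(\psi_{m_j},\psi_{m_k})(\,\cdots\,)$. You instead work tensor-by-tensor, group the basis factors by their distinct values $\phi_1,\dots,\phi_q$, derive $P_{r}^n(\phi)=H_r(\varphi(\phi))+o_P(1)$ within each group, and then invoke orthonormality to kill the cross-group contractions, arriving directly at the product $\prod_j H_{r_j}(\varphi(\phi_j))$ before applying Remark~\ref{vector_convergence} and continuous mapping. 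Your route is arguably more transparent about why the limit is a product of Hermite polynomials; the paper's buys a cleaner induction that never needs to separate ``same-function'' from ``different-function'' coincidences.

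One remark on your stated obstacle: you do \emph{not} need the bound $\mathbb{E}[A_k^n(R_M)^2]\le C_k\|R_M\|_{L_2}^2$ uniformly in $n$. The standard converging-together criterion only requires $\lim_{M}\varlimsup_{n}\,\mathbb{E}[A_k^n(R_M)^2]=0$, and since $R_M=f-f_M$ is continuous for each fixed $M$, the Riemann sum $n^{-k}\sum_{\vec i}R_M(\vec i/n)^2$ converges to $\|R_M\|_{L_2}^2$ as $n\to\infty$; hence $\varlimsup_n\mathbb{E}[A_k^n(R_M)^2]\le k!\,\|R_M\|_{L_2}^2\to0$. This is exactly how the paper closes the argument (its inequality~(12) and the subsequent $3\delta$ estimate with bounded Lipschitz test functions), so your worry about a uniform modulus-of-continuity estimate is unnecessary.
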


\begin{proof}

    Let's prove a stronger statement. Namely, let's use the method of mathematical induction to prove the convergence of a finite set of polynomials $A_k^n(\overrightarrow{\varepsilon})$.

    Base case, k=1. For any $f \in C([0,1])$: $$ A_1^n(f)= \frac{1}{\sqrt{n}} \sum\limits_{i=1}^n f\left( \frac{i}{n} \right) \varepsilon_i \Longrightarrow (f, \xi),~n \to \infty.$$
    This statement has already been proved in the theorem~\ref{theorem_convergence_bernuolli_to_gaussian}. Also, from remark~\ref{vector_convergence}, one can say that
    
    $\forall j \geq2,~\forall f_1,\ldots, f_j \in C([0,1]):$
    $$\left(A_1^n(f_1),\ldots, A_1^n(f_j)\right) \Longrightarrow \left( (f_1, \xi), \ldots, (f_j, \xi) \right),~n \to \infty.$$
    
    Induction step. Suppose that the following holds:

    $\forall j\geq2,~ \forall p_m \leq k-1~ \forall f_m \in C([0,1]^{p_m}) , m =\overline{1,j}:$
    $$\left(A_{p_1}^n(f_1),\ldots, A_{p_j}^n(f_j)\right) \Longrightarrow \left( (f_1, \xi), \ldots, (f_j, \xi) \right),~n \to \infty,$$
    where $f ( \underbrace{\xi, \ldots, \xi}_{k-1} )$ is Hermite polynomial related to $f \in C([0,1]^{k-1})$ from the left-hand side of the expression. 
    
    Suppose that $f \in C([0,1]^k)$ can be represented as
    $$f(x_1, \ldots, x_k) = \sum\limits_{m_1, \ldots, m_k}^M c_{m_1 \ldots m_k} \psi_{m_1}(x_1) \cdot \ldots \cdot \psi_{m_k}(x_k), $$
    where $\left\{ \psi_m, m \geq 1 \right\}$ is orthonormal basis in $L_2([0,1])$ where for all $m\geq 1$ $ \psi_m \in C([0,1]).$
    Then
    
    $$\frac{1}{n^{\frac{k}{2}}} \sum\limits_{i_1 \neq \ldots \neq i_k} f \left( \frac{i_1}{n}, \ldots, \frac{i_k}{n}  \right) \varepsilon_{i_1} \ldots \varepsilon_{i_k} = $$
    $$ =\frac{1}{n^{\frac{k}{2}}} \sum\limits_{i_1 \neq \ldots \neq i_k} \sum\limits_{m_1, \ldots, m_k = 1}^M c_{m_1 \ldots m_k} \psi_{m_1}\left( \frac{i_1}{n} \right) \ldots \psi_{m_k} \left( \frac{i_k}{m} \right) \varepsilon_{i_1} \ldots \varepsilon_{i_k} = $$
    
    $$= \frac{1}{n^\frac{k}{2}} \sum\limits_{m_k = 1}^M \sum\limits_{i_k =1}^n \psi_{m_k} \left( \frac{i_k}{n} \right) \varepsilon_k \sum\limits_{m_1 \ldots m_{k-1} =1}^M \sum\limits_{i_1 \neq \ldots \neq i_{k-1}} \psi_{m_1}\left( \frac{i_1}{n} \right) \ldots \psi_{m_{k-1}} \left( \frac{i_{k-1}}{n} \right) \cdot$$
    $$ \cdot c_{m_1 \ldots m_k} \varepsilon_{i_1} \ldots \varepsilon_{i_k} -$$
    
    $$ - \frac{1}{n^\frac{k}{2}} \sum\limits_{j=1}^{k-1} \sum\limits_{m_1, \ldots, m_k = 1}^M \sum\limits_{i_1 \neq \ldots \neq i_{k-1}} c_{m_1 \ldots m_k}  \psi_{m_1}\left( \frac{i_1}{n} \right) \ldots \psi_{m_j}\left( \frac{i_j}{n} \right) \ldots \psi_{m_{k-1}} \left( \frac{i_{k-1}}{n} \right) \cdot$$
    
    $$ \cdot \psi_{m_k}\left( \frac{i_j}{n} \right) \varepsilon_{i_1} \ldots \hat{\varepsilon}_{i_j} \ldots \varepsilon_{i_{k-1}}. $$
    Here, the notation $\hat{\varepsilon}_{i_j}$ means that the factor $\varepsilon_{i_j}$ has been removed from the product. 
    
    Due to the induction assumption, the vector of two summands in the previous expression weakly converges, so the limit of the difference of these two summands will converge to the difference of the limits when $n$ goes to $\infty.$
    Consider the second term:
    \begin{gather}
        \frac{1}{n^\frac{k}{2}} \sum\limits_{j=1}^{k-1} \sum\limits_{m_1, \ldots, m_k = 1}^M \sum\limits_{i_1 \neq \ldots i_{k-1}} c_{m_1 \ldots m_k}  \psi_{m_1}\left( \frac{i_1}{n} \right) \ldots \psi_{m_j}\left( \frac{i_j}{n} \right) \ldots \psi_{m_{k-1}} \left( \frac{i_{k-1}}{n} \right) \cdot \nonumber \\
     \cdot  \psi_{m_k}\left( \frac{i_j}{n} \right) \varepsilon_{i_1} \ldots \hat{\varepsilon_{i_j}} \ldots \varepsilon_{i_{k-1}} =
    \end{gather}
    
    \begin{gather}
        = \sum\limits_{m_j}^M \sum\limits_{i_j}^n \psi_{m_j} \left( \frac{i_j}{n} \right) \psi_{m_k} \left( \frac{i_j}{n}\right) \frac{1}{n} \sum\limits_{m_1 \ldots \hat{m}_j \ldots m_{k-1}} c_{m_1 \ldots m_k} \sum\limits_{i_1 \neq \ldots \neq \i_{k-1}} \frac{1}{n^\frac{k-1}{2}} \psi_{m_1}\left( \frac{i_1}{n} \right) \ldots \nonumber \\
        \hat{\psi}_{m_j}\left( \frac{i_j}{n} \right) \ldots \psi_{m_{k-1}}\left( \frac{i_{k-1}}{n} \right) \varepsilon_{i_1} \ldots \hat{\varepsilon}_{i_j} \ldots \varepsilon_{i_{k-1}} - \\
        - \frac{1}{n^{\frac{k}{2}}} \sum\limits_{l \in \left\{ 1, \ldots, k-1\right\} \setminus \left\{j \right\}} \sum\limits_{m_1 \ldots m_k}^M \sum\limits_{i_1 \neq \ldots \neq i_{k-1}} c_{m_1 \ldots m_k} \psi_{m_1}\left( \frac{i_1}{n} \right) \ldots \psi_{m_j}\left( \frac{i_l}{n} \right) \ldots \nonumber \\
        \psi_{m_l}\left( \frac{i_l}{n} \right) \ldots \psi_{m_{k-1}}\left( \frac{i_{k-1}}{n} \right) \psi_{m_k}\left( \frac{i_l}{n} \right) \varepsilon_{i_1} \ldots \hat{\varepsilon}_{i_j} \ldots \varepsilon_{i_{k-1}}
    \end{gather}
    
    The  expression (9) converges to zero. We can show this in the following way: let's fix $l$ and take out all multipliers with $i_l$, as we did before.
    
     $$\frac{1}{n^{\frac{k}{2}}}  \sum\limits_{m_1 \ldots m_k}^M \sum\limits_{i_1 \neq \ldots \neq i_{k-1}} c_{m_1 \ldots m_k} \psi_{m_1}\left( \frac{i_1}{n} \right) \ldots \psi_{m_j}\left( \frac{i_l}{n} \right) \ldots $$ 
    $$  \psi_{m_l}\left( \frac{i_l}{n} \right) \ldots \psi_{m_{k-1}}\left( \frac{i_{k-1}}{n} \right) \psi_{m_k}\left( \frac{i_l}{n} \right) \varepsilon_{i_1} \ldots \hat{\varepsilon}_{i_j} \ldots \varepsilon_{i_{k-1}} = $$
    $$ = \sum\limits_{m_l, m_j, m_k}^M \sum\limits_{i_l=1}^n \psi_{m_j} \left(\frac{i_l}{n}\right) \psi_{m_l} \left(\frac{i_l}{n}\right) \psi_{m_k} \left(\frac{i_l}{n}\right) \varepsilon_{i_l} \frac{1}{\sqrt{n}} \cdot $$
    $$ \frac{1}{n} \cdot \frac{1}{n^{\frac{k-3}{2}}}\sum_{m_p, p\neq l, p\neq j, p \neq k}^M  \sum\limits_{i_1 \neq \ldots \neq i_{k-1}, \hat{i_l},\hat{i_j}, \hat{i_k} } \prod\limits_{p=1}^{k-1} \psi_{m_p}\left(\frac{i_p}{n}\right) \varepsilon_{i_p} - P(\varepsilon, f),$$
    where $P(\varepsilon, f)$ polynomial that must be subtracted to avoid repeating by indexes.
    
   Expression
    
    $$ \frac{1}{n^{\frac{k-3}{2}}}\sum_{m_p, p\neq l, p\neq j, p \neq k}^M  \sum\limits_{i_1 \neq \ldots \neq i_{k-1}, \hat{i_l},\hat{i_j}, \hat{i_k} } \prod\limits_{p=1}^{k-1} \psi_{m_p}\left(\frac{i_p}{n}\right) \varepsilon_{i_p}$$
    weakly converges. Because of the factor $\frac{1}{n}, $ the whole expression converges to zero.  Repeating a similar procedure for $P(\varepsilon, f)$, we get what we were looking for.
    
    So, the limit of the term (7) equals to limit of the expression (8). Now, using the assumption of the induction step one can conclude that the weak limit of
    $$ \frac{1}{n^\frac{k}{2}} \sum\limits_{j=1}^{k-1} \sum\limits_{m_1, \ldots, m_k = 1}^M \sum\limits_{i_1 \neq \ldots i_{k-1}} c_{m_1 \ldots m_k}  \psi_{m_1}\left( \frac{i_1}{n} \right) \ldots \psi_{m_j}\left( \frac{i_j}{n} \right) \ldots \psi_{m_{k-1}} \left( \frac{i_{k-1}}{n} \right) \cdot$$
    $$ \cdot \psi_{m_k}\left( \frac{i_j}{n} \right) \varepsilon_{i_1} \ldots \hat{\varepsilon_{i_j}} \ldots \varepsilon_{i_{k-1}}  = $$
    coincides with the weak limit of
    $$=  \sum\limits_{m_j}^M \sum\limits_{i_j}^n \psi_{m_j} \left( \frac{i_j}{n} \right) \psi_{m_k} \left( \frac{i_j}{n}\right) \frac{1}{n} \sum\limits_{m_1 \ldots \hat{m}_j \ldots m_{k-1}} c_{m_1 \ldots m_k} \sum\limits_{i_1 \neq \ldots \neq \i_{k-1}} \frac{1}{n^\frac{k-1}{2}} \psi_{m_1}\left( \frac{i_1}{n} \right) \ldots $$
    $$\hat{\psi}_{m_j}\left( \frac{i_j}{n} \right) \ldots \psi_{m_{k-1}}\left( \frac{i_{k-1}}{n} \right) \varepsilon_{i_1} \ldots \hat{\varepsilon}_{i_j} \ldots \varepsilon_{i_{k-1}} $$
    
    Using the induction step, we obtain the following:
    
    $$  \sum\limits_{m_1 \ldots m_k = 1}^M c_{m_1 \ldots m_k} \left( \psi_{m_j}, \psi_{m_k} \right) \left( (\psi_{m_1}, \xi) * \ldots * \widehat{(\psi_{m_j}, \xi)} * \ldots (\psi_{m_k}, \xi) \right), n \to \infty.$$
    
    Therefore, we get:
    
    $$\frac{1}{n^{\frac{k}{2}}} \sum\limits_{i_1 \neq \ldots \neq i_k} f \left( \frac{i_1}{n}, \ldots, \frac{i_k}{n}  \right) \varepsilon_{i_1} \ldots \varepsilon_{i_k} \Longrightarrow$$
    $$\Longrightarrow \sum\limits_{m_1 \ldots m_k=1}^M c_{m_1 \ldots m_k} \Big( (\psi_{m_k}, \xi) \left( (\psi_{m_1}, \xi) * \ldots * (\psi_{m_{k-1}}) \right) -$$
    $$-\sum\limits_{j=1}^{k-1} c_{m_1 \ldots m_k} (\psi_{m_j}, \psi_{m_k}) \left( (\psi_{m_1}, \xi) * \ldots * \widehat{(\psi_{m_j}, \xi)} * \ldots * (\psi_{m_{k-1}}, \xi) \right)  = $$
    $$= \sum\limits_{m_1 \ldots m_k=1}^M c_{m_1 \ldots m_k} (\psi_{m_1}, \xi) * \ldots * (\psi_{m_k}, \xi) \overset{def}{=} f(\underbrace{\xi, \ldots, \xi}_k).$$

    Convergence of a vector
    $$\left(A_{p_1}^n(f_{1}),\ldots, A_{p_j}^n(f_{j}) , A_k^n (f_k)\right)$$
    follows from the assumption of induction and from the fact that the weak limit of the polynomial $A_k^n$ coincides with the weak limit of the sum of polynomials of lower degrees.

    We proved the theorem for the case when the function is represented as a finite linear combination of basis vectors. Now we will prove that this statement is also true for the general case.

    For any $f \in C([0,1]^k):$
    $$f(x_1, \ldots, x_k) = \sum\limits_{m_1, \ldots, m_k}^\infty c_{m_1 \ldots m_k} \psi_{m_1}(x_1) \ldots \psi_{m_k}(x_k)$$
    consider the following function:
    $$f^M (x_1, \ldots, x_k) = \sum\limits_{m_1, \ldots, m_k}^M c_{m_1 \ldots m_k} \psi_{m_1}(x_1) \ldots \psi_{m_k}(x_k) .$$
    It has already been shown that
    \begin{gather}
        A_n^k(f^M) \Longrightarrow f^M (\xi, \ldots, \xi), n \to \infty.
    \end{gather}
    Also, from the construction of infinite dimensional Hermite polynomials, we know that
    \begin{gather}
        f^M (\xi, \ldots, \xi) \Longrightarrow f(\xi, \ldots, \xi), M \to \infty.
    \end{gather}

    \begin{gather}
         \overline{\lim\limits_{n \to \infty}} \mathbb{E} \left( A_n^k (f^M - f) \right)^2 = \overline{\lim\limits_{n \to \infty}} \frac{1}{n^k}\sum\limits_{i_1 \neq \ldots \neq i_k}^n \left( (f^M - f)\left( \frac{i_1}{n}, \ldots, \frac{i_k}{n} \right)\right)^2 \leq \\ \nonumber
        \leq \overline{\lim\limits_{n \to \infty}} \frac{1}{n^k}\sum\limits_{i_1, \ldots, i_k = 1}^n \left( (f^M - f)\left( \frac{i_1}{n}, \ldots, \frac{i_k}{n} \right)\right)^2 = ||f^M - f||^2 \longrightarrow 0, M \to \infty.
    \end{gather}
   Let's show that for any bounded function $\Phi$ which satisfies the Lipschitz condition 
    $$\mathbb{E} \Phi \left( A_n^k (f) \right) \longrightarrow \mathbb{E} \Phi \left( f(\xi, \ldots, \xi) \right), n \to \infty.$$
    Because of (11)  for any $\delta >0 $ there exists $M_0$ such that
    
    $\forall M \geq M_0:$
    $$|\mathbb{E} \Phi \left( f(\xi, \ldots, \xi) \right) - \mathbb{E} \Phi \left( f^M(\xi, \ldots, \xi) \right)| < \delta$$
    For such $M \geq M_0$ choose $n_0$ such that

    $\forall n \geq n_0:$

   $$|\mathbb{E} \Phi \left( A_n^k(f) \right) - \mathbb{E} \Phi \left( A_n^k(f^M) \right) | <\delta$$ 
   and
    $$|\mathbb{E} \Phi \left( A_n^k(f^M) \right) - \mathbb{E} \Phi \left( f^M(\xi, \ldots, \xi) \right) | <\delta.$$
    We can do this because of (10) and (12).
    Then, for any $\delta >0$ and sufficiently large $M$ 
    
    $\exists n_0~ \forall n \geq n_0:$
    $$|\mathbb{E} \Phi \left( A_n^k (f) \right) - \mathbb{E} \Phi \left( f(\xi, \ldots, \xi) \right)| \leq$$
    $$\leq |\mathbb{E} \Phi \left( f(\xi, \ldots, \xi) \right) - \mathbb{E} \Phi \left( f^M(\xi, \ldots, \xi) \right)| + $$
    $$ + |\mathbb{E} \Phi \left( A_n^k(f) \right) - \mathbb{E} \Phi \left( A_n^k(f^M) \right) | +$$
    $$ + |\mathbb{E} \Phi \left( A_n^k(f^M) \right) - \mathbb{E} \Phi \left( f^M(\xi, \ldots, \xi) \right) |  \leq 3\delta.$$
    So, we proved that
    $$A_n^k (f) \Longrightarrow f(\xi, \ldots, \xi), n \to \infty.$$

\end{proof}

\end{document}